\title[Primary decompositions of Frobenius powers of ideals]{Growth of
primary decompositions of \\
Frobenius powers of ideals}
\author{Trung T. Dinh}
\newtheorem{theorem}{Theorem}[section]
\newtheorem{lemma}[theorem]{Lemma}
\newtheorem{proposition}[theorem]{Proposition}
\newtheorem{corollary}[theorem]{Corollary}
\theoremstyle{definition}
\newcommand{\ass}{\text{Ass }}
\newcommand{\mbf}{\mathbf}
\newcommand{\mf}{\mathfrak}
\newcommand{\ds}{\displaystyle}
\newcommand{\Min}{\text{Min}}
\begin{document}
\maketitle


\begin{abstract} It was previously known, by work of Smith-Swanson
and of Sharp-Nossem, that the linear growth property of primary
decompositions of Frobenius powers of ideals in rings of prime
characteristic has strong connections to the localization problem in
tight closure theory. The localization problem has recently been settled
in the negative by Brenner and Monsky, but the linear growth question
is still open. We study growth of primary decompositions of
Frobenius powers of dimension one homogeneous ideals in graded rings
over fields. If the ring is positively graded we prove that the linear
growth property holds. For non-negatively graded rings we are able to
show that there is a ``polynomial growth''. We present explicit primary
decompositions of Frobenius powers of an ideal, which were known to have
infinitely many associated primes, having this linear growth property.
We also discuss some other interesting examples.
\end{abstract}


\section{Introduction}


Let $R$ be a Noetherian commutative ring of prime characteristic
$p$. Let $I$ be an ideal of $R$. For each $q=p^e$ the $q$th
\emph{Frobenius power} of $I$, denoted by $I^{[q]}$, is the ideal of $R$
generated by $q$th powers of generators of $I$. We say that the
Frobenius powers of $I$ have \emph{linear growth of primary decompositions}
if there exists a number $c$ such that, for each $q=p^e$, there is
a primary decomposition $$I^{[q]}=Q_1\cap Q_2\cap\cdots \cap Q_r$$
with the property that $(\sqrt{Q_i})^{c[q]}\subseteq Q_i$ for all
$i=1,\ldots, r$. (This number $r$ depends on $q$).

The linear growth property of primary decompositions of Frobenius powers
of ideals has strong connections to the localization problem in tight
closure theory. This connection was first discovered by Smith and
Swanson in \cite{swanson-smith}. In that paper they proved that if
the linear growth property holds for an ideal, then tight closure
commutes with the localization of that ideal at one element. Later,
Sharp and Nossem ~\cite{sharp-nossem} showed that, under the additional
 hypothesis that the ring $R$ has a test element, and the set
$\bigcup_{q=p^e}\ass R/I^{[q]}$ is finite, tight closure of $I$
 commutes with localization at arbitrary multiplicative subsets as long as
the linear growth property holds for the Frobenius powers of $I$.

Beside its importance in tight closure theory, the problem of investigating
the linear growth property of primary decompositions of Frobenius powers
is interesting in its own right. One of the reasons is that the ordinary powers
of ideals were shown to have the linear growth property. (The definition of
linear growth property for ordinary  powers is obtained by replacing
Frobenius powers in the statement by the corresponding ordinary powers).
This fact was first proved by Swanson ~\cite{swanson}, and Sharp ~\cite{sharp}
and Yao ~\cite{yao} later independently gave shorter proofs. Though these
proofs use different methods, they rely on the technique  of passing to
Rees algebras to reduce the problem to the case of principal ideals generated
by non-zero divisors, and the fact that for any ideal $I$ in a Noetherian
ring $R$, the set $\bigcup_{n>0}\ass R/I^n$ is finite. One cannot hope
to use similar ideas to attack the linear growth question of Frobenius
powers of ideals, for there is no analogue of the concept of Rees algebras
associated to Frobenius powers of ideals. Moreover, the set
$\bigcup_{q=p^e} \ass R/I^{[q]}$ may be infinite due to examples of
Katzman ~\cite{katzman} and of Singh and Swanson ~\cite{swanson-singh}.

It is an open question whether every ideal in Noetherian rings of prime characteristic has the
linear growth property. There are no known examples of ideals for which the linear growth property
does not hold. Due to the lack of techniques available in the literature to attack the problem, there has been a very limited effort
to show that certain (classes of) ideals have the linear growth property.  To the author's
knowledge the only work toward this question, besides the very general
statement of Sharp and Nossem mentioned above, was the work
of Smith and Swanson in ~\cite{swanson-smith}. They
proved that monomial ideals in a monomial ring have the linear
growth property. Part of that paper was also devoted to analyzing primary
decompositions of the Frobenius powers of the ideal $I=(x,y)$
in the hypersurface
$$
\ds R=\frac{k[t,x,y]}{\big(xy(x-y)(x-ty)\big)},
$$
in which $k$ is a field of positive characteristic $p$. This is the
well-known example with the property that the set $\bigcup_{q=p^e} \ass R/I^{[q]}$
is infinite, due to Katzman ~\cite{katzman}. Smith and Swanson
proved that this ideal have linear growth of primary decompositions.
In \cite[\S 3]{swansontenlectures}, Swanson raised the question whether for each concrete ideal
$I$ available in the literature with the property that $\bigcup_{q=p^e}\ass R/I^{[q]}$ is infinite, we can
find ``good''primary decompositions, namely decompositions growing
linearly.

This paper was inspired by the work of Smith-Swanson and was partially
motivated by the above question of Swanson. It is thus mainly concerned
with homogeneous ideals of dimension one in graded rings over fields of prime
characteristics. In Section 2 we give a short proof of the
fact that for dimension one homogeneous ideals in a finitely
generated, positively graded ring over a field of positive characteristic, the linear growth
property holds.

Section 3 studies dimension one homogenous ideals in non-negatively graded, finitely generated rings
over fields of positive characteristics. We are not able to show that the Frobenius powers
of these ideals have linear growth of primary decompositions, but are able to prove that there are primary decompositions
 having polynomial growth. A consequence of this result, however, produces a class of ideals
having the linear growth property, which includes Katzman's ideal as a special case.

Section 4 analyzes primary decompositions of the Frobenius powers of
the ideal \linebreak $I=(u,v,x,y)$ in the hypersurface
$$
R=\ds\frac{k[t,u,v,x,y]}{(u^2x^2+tuvxy+v^2y^2)}.
$$
This hypersurface was studied by Singh and Swanson in
~\cite{swanson-singh}. By a slight modification of their proofs, we show
 that the set $\bigcup_{q=p^e}\ass R/I^{[q]}$ is infinite. We prove
 that this ideal has the linear growth property.

Sections 5 and 6 are reserved for discussions of some other
examples. In particular, we would like to pose a question on the
finiteness of the set of associated primes of the Frobenius powers
of the ideal studied by Monsky in \cite{monsky}, which was used recently by Brenner and
Monsky in ~\cite{brenner-monsky} to disprove the localization conjecture for tight closure.
This is an interesting question since its answer will likely give us some information on the
validity of the linear growth question.


\section{Dimension one homogeneous ideals in positively graded rings}

It was shown independently by Huneke ~\cite{huneke} and Vraciu
~\cite{vraciu} that, for dimension one homogeneous ideals in a
finitely generated, positively graded ring over a field of positive
characteristic, tight closure commutes with localization. Using some of the main
results in these papers, we show in this section that such ideals have the linear
growth property. In view of the result of Sharp and Nossem mentioned in the
introduction, this gives an alternative proof of the commutativity of tight closure
and localization for this class of ideals.

We first, however, start with a remark that for the linear growth question
in general, we need only find bounds for the embedded components.


\begin{proposition}\label{embedded} Let $R$ be a Noetherian ring of
prime characteristic $p$ and $I$ an ideal of $R$. Then there is a
number $c$ such that for each $q=p^e$ and for each primary
decomposition
$$I^{[q]}=Q_1\cap Q_2\cap\cdots\cap Q_r,$$
we have $(\sqrt{Q_i})^{c[q]}\subseteq Q_i$ for all isolated components
$Q_i$.
\end{proposition}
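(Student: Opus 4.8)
The plan is to exploit the fact that isolated primary components are intrinsic. If $I^{[q]}=Q_1\cap\cdots\cap Q_r$ is any primary decomposition and $Q_i$ is an isolated component, then $\mathfrak{p}_i:=\sqrt{Q_i}$ is a minimal prime of $I^{[q]}$ and $Q_i=I^{[q]}R_{\mathfrak{p}_i}\cap R$, independently of the chosen decomposition. Moreover $\sqrt{I^{[q]}}=\sqrt{I}$, so $\Min(R/I^{[q]})=\Min(R/I)$ for every $q$. Hence it suffices to fix a minimal prime $\mathfrak{p}$ of $I$ and to produce, uniformly in $q$, an integer $c_{\mathfrak{p}}$ with $\mathfrak{p}^{c_{\mathfrak{p}}q}\subseteq I^{[q]}R_{\mathfrak{p}}\cap R$; the desired $c$ is then $\max_{\mathfrak{p}\in\Min(R/I)}c_{\mathfrak{p}}$, a maximum over a finite set since $R$ is Noetherian.

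Now fix such a $\mathfrak{p}$, and suppose $I$ is generated by $n$ elements. In the local ring $R_{\mathfrak{p}}$ the ideal $IR_{\mathfrak{p}}$ is $\mathfrak{p}R_{\mathfrak{p}}$-primary (because $\mathfrak{p}$ is minimal over $I$), so there is an integer $a$ with $(\mathfrak{p}R_{\mathfrak{p}})^{a}\subseteq IR_{\mathfrak{p}}$. Next I would invoke the elementary pigeonhole containment $I^{N}\subseteq I^{[q]}$ valid whenever $N\ge n(q-1)+1$: any product of $N$ of the chosen generators of $I$ must use one of them at least $q$ times, hence is divisible by its $q$th power. Putting these together, for $c_{\mathfrak{p}}:=an$ we have $c_{\mathfrak{p}}q=anq\ge a\big(n(q-1)+1\big)$, so
$$(\mathfrak{p}R_{\mathfrak{p}})^{c_{\mathfrak{p}}q}\subseteq (\mathfrak{p}R_{\mathfrak{p}})^{a(n(q-1)+1)}\subseteq (IR_{\mathfrak{p}})^{n(q-1)+1}=I^{\,n(q-1)+1}R_{\mathfrak{p}}\subseteq I^{[q]}R_{\mathfrak{p}}.$$
Contracting to $R$ and using $\mathfrak{p}^{m}\subseteq (\mathfrak{p}R_{\mathfrak{p}})^{m}\cap R$ gives $\mathfrak{p}^{c_{\mathfrak{p}}q}\subseteq I^{[q]}R_{\mathfrak{p}}\cap R=Q_i$, i.e.\ $(\sqrt{Q_i})^{c_{\mathfrak{p}}q}\subseteq Q_i$, as required.

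I do not expect a serious obstacle here: once the reduction to a single minimal prime and the passage to $R_{\mathfrak{p}}$ are in place the argument is bookkeeping. The one point deserving care is that the bound must be genuinely linear in $q$ with a constant that does not depend on $q$; this is exactly why the pigeonhole estimate $I^{\,n(q-1)+1}\subseteq I^{[q]}$ is the right tool, whereas a naive comparison of $\mathfrak{p}^{(\,\cdot\,)}$ with $(I^{[q]})$ directly would risk smuggling in a $q$-dependent constant. It is also worth remarking that this proposition is precisely what licenses the strategy of the rest of the paper: for the linear growth question one may ignore the isolated components and concentrate entirely on bounding the embedded ones.
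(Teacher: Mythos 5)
Your proposal is correct, and it runs closely parallel to the paper's proof but diverges in the key technical step. Both arguments rest on the same structural facts: the isolated component of $I^{[q]}$ belonging to a minimal prime $\mathfrak p$ is intrinsically $I^{[q]}R_{\mathfrak p}\cap R$, the minimal primes of $I^{[q]}$ coincide with those of $I$, and this finite set lets one take a uniform constant. Where you differ is in how a fixed power of $\mathfrak p$ is pushed into $I^{[q]}R_{\mathfrak p}\cap R$. The paper fixes $c$ with $\mathfrak p^{c}\subseteq IR_{\mathfrak p}\cap R$ for every minimal $\mathfrak p$ and then simply applies the Frobenius map to that containment, giving $(\mathfrak p^{c})^{[q]}\subseteq (IR_{\mathfrak p}\cap R)^{[q]}\subseteq I^{[q]}R_{\mathfrak p}\cap R$ in one step. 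You instead stay inside $R_{\mathfrak p}$, use $(\mathfrak pR_{\mathfrak p})^{a}\subseteq IR_{\mathfrak p}$, and convert ordinary powers of $I$ to Frobenius powers through the pigeonhole containment $I^{\,n(q-1)+1}\subseteq I^{[q]}$. Your route proves the (formally slightly stronger) ordinary-power bound $\mathfrak p^{c_{\mathfrak p}q}\subseteq Q_i$, which immediately implies $(\mathfrak p^{c_{\mathfrak p}})^{[q]}\subseteq Q_i$ since $(\mathfrak p^{c})^{[q]}\subseteq \mathfrak p^{cq}$ in general; so nothing is lost. The trade-off is that the paper's argument avoids the pigeonhole estimate entirely and never needs to count generators of $I$, while yours makes the linear dependence on $q$ more explicit at the cost of letting the number of generators $n$ enter the constant.
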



\begin{proof} If $P$ is a minimal prime over $I$, then $IR_P\cap R$
is the unique isolated component of $I$ whose radical is $P$, hence
$P=\sqrt{IR_P\cap R}$. There are only a finite number of minimal
primes over $I$, thus we can find a number $c$ such that
$P^c\subseteq IR_P\cap R$ for all $P$ minimal over $I$. Now let
$q=p^e$. For each prime ideal $P$ minimal over $I$ we have
$Q=I^{[q]}R_P\cap R$ is the unique isolated component of $I^{[q]}$
whose radical is $P$, and
$$(\sqrt{Q})^{c[q]}=P^{c[q]}\subseteq (IR_P\cap R)^{[q]}\subseteq
I^{[q]}R_P\cap R = Q.$$
This gives us the conclusion of the proposition.
\end{proof}


\begin{theorem} Let $R$ be a finitely generated, positively graded
ring over a field of prime characteristic $p$. Let $I$ be a
homogeneous ideal of height $d-1$, where $d$ is the dimension of $R$.
 Then the Frobenius powers of $I$ have linear growth of primary decompositions.
\end{theorem}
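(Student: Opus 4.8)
The plan is to reduce, via Proposition~\ref{embedded}, to a single uniform estimate for the one embedded component that can occur. Since $R/I$ (hence $R/I^{[q]}$) has dimension one, the only associated prime of $R/I^{[q]}$ that is not minimal over $I$ can be the irrelevant maximal ideal $\mf{m}:=R_+$: associated primes of a homogeneous ideal are homogeneous, the minimal members of $\ass R/I^{[q]}$ are exactly the finitely many (and $q$-independent) minimal primes of $I$, and any embedded member is properly contained in none of them and properly contains one of them, hence is zero-dimensional and homogeneous, hence equal to $\mf{m}$. Therefore, for every $q=p^e$ one has a primary decomposition
$$
I^{[q]}=\Big(\bigcap_{P\in\Min I}(I^{[q]}R_P\cap R)\Big)\cap Q^{(q)},
$$
in which the first intersection is $J^{[q]}:=I^{[q]}:\mf{m}^\infty$ (the intersection of the isolated, one-dimensional components) and $Q^{(q)}$ is $\mf{m}$-primary (or equals $R$, in which case it is simply omitted). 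Proposition~\ref{embedded} supplies a $q$-independent constant $c_1$ bounding all the isolated components of this decomposition, so it only remains to choose $Q^{(q)}$ so that $\mf{m}^{c_2q}\subseteq Q^{(q)}$ for a $q$-independent $c_2$; then $c=\max(c_1,c_2)$ witnesses the linear growth property.

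I would take the natural candidate $Q^{(q)}:=I^{[q]}+\mf{m}^{N_q}$, which is $\mf{m}$-primary (or $R$) and contains $\mf{m}^{N_q}$. A direct check shows $I^{[q]}=J^{[q]}\cap Q^{(q)}$ exactly when $J^{[q]}\cap\mf{m}^{N_q}\subseteq I^{[q]}$; and since $\mf{m}=R_+$ forces $\mf{m}^{N}\subseteq R_{\ge N}$, this holds as soon as $(J^{[q]})_n=(I^{[q]})_n$ for all $n\ge N_q$, i.e.\ as soon as $N_q>a_0(R/I^{[q]})$, where $a_0(R/I^{[q]})$ denotes the top degree of the finite-length module $H^0_{\mf{m}}(R/I^{[q]})=J^{[q]}/I^{[q]}$. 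Thus the whole theorem comes down to the single assertion that $a_0(R/I^{[q]})$ grows at most linearly in $q$. To set this up I would fix once and for all a homogeneous element $z\in\mf{m}$ lying outside every minimal prime of $I$ (possible since $\mf{m}$ is not minimal over $I$); then $\ass R/J^{[q]}=\Min I$ shows $z$ is a nonzerodivisor on $R/J^{[q]}$ for \emph{every} $q$, and $I^{[q]}:\mf{m}^\infty=I^{[q]}:z^\infty$, so $a_0(R/I^{[q]})$ measures, in the language of this uniform parameter, how far into high degrees the saturation of $I^{[q]}$ exceeds $I^{[q]}$ itself.

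The hard part — and essentially the only place the positivity of the grading is used — is precisely this uniform linear bound $a_0(R/I^{[q]})<cq$. This is where I would invoke the analysis of dimension-one homogeneous ideals carried out by Huneke~\cite{huneke} and Vraciu~\cite{vraciu} in their proofs that tight closure commutes with localization in this setting: the structural input there (uniform, Hartshorne--Speiser--Lyubeznik-type control of the Frobenius action governing $H^0_{\mf{m}}(R/I^{[q]})$, equivalently a $q$-independent bound on the power of $z$ needed to compute the saturation $I^{[q]}:z^\infty$) is exactly what yields such a constant $c$. Granting it, set $N_q:=a_0(R/I^{[q]})+1<cq+1\le(c+1)q$; then $J^{[q]}\cap\mf{m}^{N_q}\subseteq J^{[q]}\cap R_{\ge N_q}\subseteq I^{[q]}$, so $Q^{(q)}=I^{[q]}+\mf{m}^{N_q}$ is a legitimate $\mf{m}$-primary component with $\mf{m}^{(c+1)q}\subseteq\mf{m}^{N_q}\subseteq Q^{(q)}$, which together with Proposition~\ref{embedded} completes the argument with $c_2=c+1$. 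The residual work, beyond citing \cite{huneke,vraciu}, is to extract the clean statement ``$a_0(R/I^{[q]})=O(q)$'' from their results and to confirm that it is unaffected by allowing $R$ to be merely positively graded rather than standard graded.
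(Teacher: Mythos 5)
Your proof is correct and takes essentially the same approach as the paper's: both isolate the single possible embedded $\mathfrak{m}$-primary component, dispose of the isolated ones by Proposition~\ref{embedded}, and rest entirely on the Huneke--Vraciu uniform bound on the saturation of $I^{[q]}$ (equivalently, on the degrees of $H^0_{\mathfrak{m}}(R/I^{[q]})$). The only difference is presentational: you take $Q^{(q)}=I^{[q]}+\mathfrak{m}^{N_q}$ directly as the $\mathfrak{m}$-primary component, whereas the paper fixes a homogeneous $z$ outside every minimal prime of $I$, uses the decomposition $I^{[q]}=(I^{[q]}+Rz^{cq})\cap(I^{[q]}:z^{\infty})$, and then separately verifies that $I^{[q]}+Rz^{cq}$ contains a uniformly bounded power of $\mathfrak{m}$.
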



\begin{proof} Denote by $\mf{m}$ the maximal homogeneous ideal of
$R$. Let $z\in \mf{m}$ be such that it is not contained in any minimal prime
of $I$. We claim that there is a constant $c$ such that
$$
I^{[q]}:z^\infty=I^{[q]}:z^{cq}
$$
for all $q=p^e$. Here we define $I^{[q]}:J^\infty=\bigcup_{n>0}I^{[q]}:J^n$ for each ideal $J$ of $R$.

To prove the claim, we note that it was proved in ~\cite{vraciu}, or
~\cite{huneke}, that there is a constant $c$ such that
$$
I^{[q]}:\mf{m}^\infty=I^{[q]}:\mf{m}^{cq}
$$
for all $q=p^e$. Let $a\in I^{[q]}:z^\infty$ be an arbitrary element. Then
$az^n\in I^{[q]}$ for some $n$. Since $z$ is not in any minimal
prime of $I$, the ideal $(z^n,I^{[q]})$ contains some power of
$\mf{m}$. Hence
$$
a\in I^{[q]}:\mf{m}^\infty = I^{[q]}:\mf{m}^{cq}\subseteq I^{[q]}:z^{cq}.
$$
Now the claim is proved, and it follows that
$$
I^{[q]}=(I^{[q]}+Rz^{cq})\cap (I^{[q]}:z^\infty).
$$
Since $z$ is not contained in any minimal prime of $I$, the ideal
$I^{[q]}:z^\infty$ is precisely the intersection of all isolated
components of any primary decomposition of $I^{[q]}$, and the ideal
$I^{[q]}+Rz^{cq}$ is primary to the homogeneous maximal ideal
$\mf{m}$. Pick a number $c'$ such that $I+Rz^c\supseteq
\mf{m}^{c'}$. Then
$$
I^{[q]}+Rz^{cq}\supseteq \mf{m}^{c'[q]}.
$$
Therefore, combining with Proposition ~\ref{embedded}, from the
decomposition
$$
\ds I^{[q]}=(I^{[q]}+Rz^{cq})\cap (I^{[q]}:z^\infty),
$$ we obtain primary decompositions with the desired property.
\end{proof}

\section{Dimension one homogeneous ideals in non-negatively graded rings}


Let $k$ be a field of positive characteristic $p$. We define an
$\mathbb{N}$-grading on the polynomial ring $k[t,x_1,\ldots,x_n]$ as follows:
 $\deg t=0$ and $\deg x_i=1$ for all $i=1,\ldots, n.$ In this section
 we study primary decompositions of the Frobenius powers of the
dimension one ideal $I=(x_1,\ldots, x_n)$ in the graded ring
$$
R=\frac{k[t,x_1,\ldots,x_n]}{(f_1,f_2,\ldots,f_r)}
$$
in which $f_1,f_2,\ldots, f_r$ are homogeneous polynomials. We are
not able to prove that the linear growth property holds in this case, but
 are able to establish polynomial growth. We will need some preparation.


The following lemma was extracted from the proof of Lemma 2.1 in ~\cite{swanson-smith}.

\begin{lemma}\label{stable}
Let $k$ be a field of prime characteristic $p$ and
$$
R=\frac{k[t,x_1,\ldots,x_n]}{(f_1,\ldots,f_r)},
$$
where $t,x_1,\ldots,x_n$ are variables and $f_1,\ldots,f_r$ are
homogeneous polynomials (with respect to the grading defined earlier)
 of positive degrees. Let $I=(x_1,\ldots,x_n)$. Suppose that for
 each $q=p^e$ there is a polynomial $h_q(t)\in k[t]$ such that the
ideal $I^{[q]}:h_q(t)$ is primary to the ideal $(x_1,\ldots,x_n)$.

Let $h_q(t)=\tau_{1}^{s_{1}}\tau_2^{s_2}\cdots\tau_l^{s_l}$ be the decomposition of $h_q(t)$
into a product of powers of distinct irreducible polynomials. (This number $l$ may depend on $q$). Then
there is a constant $c$ such that for each $q=p^e$ the ideal
$I^{[q]}$ has a primary decomposition
$$
I^{[q]}=Q\cap Q_1\cap\cdots\cap Q_l,
$$
in which $Q$ is primary to $(x_1,\ldots,x_n)$, and $Q_i=I^{[q]}+R\tau_i^{s_i}$ is
primary to $(x_1,\ldots,x_n,\tau_i)$ for each $i=1,\ldots,l$. Furthermore,
$$
(\sqrt{Q_i})^{cq+s_i}\subseteq Q_i
$$
for all $i=1,\ldots,l$.
\end{lemma}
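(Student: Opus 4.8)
The plan is to run the standard Noetherian factorization $\mf{a}=(\mf{a}:x^{\infty})\cap(\mf{a}+(x^{N}))$ with $x=h_q(t)$, and to observe that for $\mf{a}=I^{[q]}$ one may take $N=1$. Two cheap remarks drive this. First, $h_q$ is a nonzero element of $k[t]$ (otherwise $I^{[q]}:h_q=R$ is not primary), and $k[t]\cap(x_1,\ldots,x_n)=0$ because every $f_j$ lies in $(x_1,\ldots,x_n)$; hence $h_q\notin(x_1,\ldots,x_n)$. Second, by hypothesis $I^{[q]}:h_q$ is $(x_1,\ldots,x_n)$-primary, so $(x_1,\ldots,x_n)$ is the only associated prime of $R/(I^{[q]}:h_q)$ and $h_q$ is a nonzerodivisor on this quotient; consequently $I^{[q]}:h_q^{m}=I^{[q]}:h_q$ for all $m\ge1$. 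A one-line argument then yields $I^{[q]}=(I^{[q]}:h_q)\cap(I^{[q]}+Rh_q)$: if $a=b+ch_q$ with $b\in I^{[q]}$ and $a\in I^{[q]}:h_q$, then $ch_q^{2}=(a-b)h_q\in I^{[q]}$, so $c\in I^{[q]}:h_q^{2}=I^{[q]}:h_q$ and $ch_q\in I^{[q]}$. I set $Q:=I^{[q]}:h_q$, which is $(x_1,\ldots,x_n)$-primary.

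Next I would split $I^{[q]}+Rh_q$ along the factorization $h_q=\tau_1^{s_1}\cdots\tau_l^{s_l}$, putting $Q_i:=I^{[q]}+R\tau_i^{s_i}$. For $i\ne j$ a B\'ezout identity $u\tau_i^{s_i}+v\tau_j^{s_j}=1$ in $k[t]$ (the $\tau$'s being distinct irreducibles) shows $1\in Q_i+Q_j$, so the $Q_i$ are pairwise comaximal. By the Chinese Remainder Theorem $\bigcap_i Q_i=\prod_i Q_i$, and $\prod_i Q_i=I^{[q]}+Rh_q$: expanding a product of elements of the $Q_i$ gives a term lying in $I^{[q]}$ unless every factor is a multiple of its $\tau_i^{s_i}$, in which case the product is a multiple of $\prod_i\tau_i^{s_i}=h_q$, so $\prod_iQ_i\subseteq I^{[q]}+Rh_q$; conversely $h_q\in\prod_iQ_i$, while $I^{[q]}\subseteq\bigcap_iQ_i=\prod_iQ_i$. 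Each $Q_i$ is primary to the maximal ideal $(x_1,\ldots,x_n,\tau_i)$: its radical contains every $x_j$ (since $x_j^{q}\in I^{[q]}$) and $\tau_i$ (since $\tau_i^{s_i}\in Q_i$), hence equals $(x_1,\ldots,x_n,\tau_i)$, and $Q_i\ne R$ because its image modulo $(x_1,\ldots,x_n)$ is $(\tau_i^{s_i})k[t]\ne k[t]$. Combining, $I^{[q]}=Q\cap Q_1\cap\cdots\cap Q_l$.

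It remains to bound the $Q_i$. Write $\mf{n}_i:=\sqrt{Q_i}=(x_1,\ldots,x_n,\tau_i)$. A generator of $\mf{n}_i^{\,N}$ is a product $x_1^{a_1}\cdots x_n^{a_n}\tau_i^{\,b}$ with $a_1+\cdots+a_n+b=N$; if some $a_j\ge q$ it lies in $I^{[q]}$, and otherwise $a_1+\cdots+a_n\le n(q-1)$, forcing $b\ge N-n(q-1)$, which is $\ge s_i$ once $N\ge n(q-1)+s_i$, so the product lies in $R\tau_i^{s_i}$. Hence $\mf{n}_i^{\,n(q-1)+s_i}\subseteq Q_i$, and with $c:=n$ we obtain $(\sqrt{Q_i})^{cq+s_i}\subseteq\mf{n}_i^{\,n(q-1)+s_i}\subseteq Q_i$ for every $i$.

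The whole argument is essentially formal once the two cheap remarks of the first paragraph are available; the only point deserving attention is that $c$ can be chosen uniformly in $q$ (and in the number $l$ of irreducible factors, which itself varies with $q$). This works because the Frobenius-driven growth is confined to the $x$-variables: a power of $\mf{n}_i$ can fail to lie in $R\tau_i^{s_i}$ only through the appearance of some $x_j^{\,q}$, and a pigeonhole count caps when that must occur at $n(q-1)+1$, with no reference to $h_q$ beyond its exponent on each $\tau_i$. The reduction to $N=1$ in the first paragraph — which is what makes the components emerge exactly as $I^{[q]}+R\tau_i^{s_i}$ rather than with inflated exponents — likewise rests only on $h_q\notin(x_1,\ldots,x_n)$.
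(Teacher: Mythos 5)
Your proof is correct, and the overall skeleton matches the paper's: first split $I^{[q]}=(I^{[q]}:h_q)\cap(I^{[q]}+Rh_q)$, which works with $N=1$ because the colon ideal $I^{[q]}:h_q$ is already saturated (you deduce this from $(x_1,\ldots,x_n)$ being the only associated prime, the paper from the equivalent reformulation that colon by any nonzero $g\in k[t]$ is stable); then decompose $I^{[q]}+Rh_q$ along the irreducible factorization of $h_q$; finally, establish the monomial pigeonhole bound $(x_1,\ldots,x_n,\tau_i)^{n(q-1)+s_i}\subseteq I^{[q]}+R\tau_i^{s_i}$. Where you diverge in technique is the middle step. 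The paper proves $I^{[q]}+Rh_q=\bigcap_i(I^{[q]}+R\tau_i^{s_i})$ by identifying the (automatically maximal) primes over $I^{[q]}+Rh_q$ as the $(x_1,\ldots,x_n,\tau_i)$ and then checking that the $(x_1,\ldots,x_n,\tau_i)$-primary component, computed as $(I^{[q]}+Rh_q)R_{(x_1,\ldots,x_n,\tau_i)}\cap R$, already contains $I^{[q]}+R\tau_i^{s_i}$ because the other $\tau_j^{s_j}$ become units in the localization. You instead observe that the ideals $Q_i=I^{[q]}+R\tau_i^{s_i}$ are pairwise comaximal via a B\'ezout identity in $k[t]$, invoke the Chinese Remainder Theorem to replace the intersection with the product, and compute the product directly. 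Both arguments are short and elementary; yours has the mild advantage of not appealing to existence and uniqueness of primary components of the auxiliary ideal, while the paper's localization argument is closer in flavor to the techniques used elsewhere in the paper. The identification of $\sqrt{Q_i}$, the verification that $Q_i$ is primary, and the final bound with $c=n$ all agree with the paper's proof.
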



\begin{proof}
Saying that $I^{[q]}:h_q(t)$ is primary to $(x_1,\ldots,x_n)$ is equivalent to
saying that
$$
I^{[q]}:h_q(t)=\big(I^{[q]}:h_q(t)\big):g(t)
$$
for all $g(t)\in k[t]\setminus \{0\}$. In particular when applied to
$g(t)=h_q(t)$ we obtain that
$$
I^{[q]}=\big(I^{[q]}+Rh_q(t)\big)\cap \big(I^{[q]}:h_q(t)\big).
$$
The ideal $Q=I^{[q]}:h_q(t)$ is primary to
$(x_1,\ldots,x_n)$ as defined. We want to analyze primary decompositions of the
ideal $I^{[q]}+Rh_q(t)$.

The ideal $I^{[q]}+Rh_q(t)$ has dimension 0, thus
every minimal prime must be maximal. Each maximal ideal containing
$I^{[q]}+Rh_q(t)$ must be of the form $(x_1,\ldots, x_n,\tau_{i})$ for
some $i$. We have the following obvious inclusion
$$
I^{[q]}+Rh_q(t)\subseteq (I^{[q]}+R\tau_{1}^{s_{1}})\cap (I^{[q]}+R\tau_{2}^{s_{2}})
\cap\cdots\cap (I^{[q]}+R\tau_{l}^{s_{l}}).
$$
\noindent For the reverse inclusion, let $Q_i$ be the unique
$(x_1,\ldots,x_n,\tau_{i})$-primary component of \linebreak $I^{[q]}+Rh_q(t)$. Then
$$
Q_i = \big(I^{[q]}+Rh_q(t)\big)R_{(x_1,\ldots,x_n,\tau_{i})}\cap
R\supseteq (I^{[q]}+R\tau_{i}^{s_{i}}).
$$
Hence
$$
I^{[q]}+Rh_q(t)=(I^{[q]}+R\tau_{1}^{s_{1}})\cap (I^{[q]}+R\tau_{2}^{s_{2}})
\cap\cdots\cap (I^{[q]}+R\tau_{l}^{s_{l}}).
$$

Our discussion so far leads to the following primary decomposition
\begin{eqnarray*}
I^{[q]} &=& \big(I^{[q]}:h_q(t)\big)\cap(I^{[q]}+R\tau_1^{s_1})\cap
(I^{[q]}+R\tau_2^{s_2})\cap\cdots\cap (I^{[q]}+R\tau_l^{s_l})\\
&=&Q\cap Q_1\cap Q_2\cap\cdots\cap Q_l,
\end{eqnarray*}
\newpage
\noindent in which $\sqrt{Q_i}=\big(x_1,\ldots,x_n,\tau_{i}\big)$, and
clearly
$$
(\sqrt{Q_i})^{nq+s_i}=\big(x_1,\ldots,x_n,\tau_i\big)^{nq+s_i}\subset
(x_1^q,\ldots,x_n^q,\tau_i^{s_i}) = I^{[q]}+R\tau_i^{s_i},
$$
as desired.
\end{proof}


The lemma above in some sense says that there are primary decompositions
of the Frobenius powers of the ideal $I$ which grow at most as fast as
the ``primary decompositions'' of the polynomials $h_q(t)$. The proposition
 below shows that there are such polynomials whose ``primary decompositions''
grow polynomially. We first need a lemma.


\begin{lemma}\label{minor}
Let $R$ be a domain and $(a_{ij}:i=1,\ldots,k, j=1,\ldots,l)$ a matrix with
entries in $R$. Let $b_1,\ldots,b_l$ be elements in the quotient field of
$R$ such that for each $i=1,\ldots, k$, the sum $a_{i1}b_1+\ldots+a_{il}b_l$
is an element of $R$ and is divisible by every non-zero minor of the matrix
$(a_{ij})$. Then there exist $b'_1,b'_2,\ldots,b'_l$ in $R$ such that
$a_{i1}b_1+\cdots+a_{il}b_l=a_{i1}b'_1+\cdots+a_{il}b'_l$ for all $i=1,\ldots, k$.
\end{lemma}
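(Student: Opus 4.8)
The plan is to proceed by induction on the number of columns $l$, the case $l=1$ being immediate since then $a_{i1}b_1\in R$ for each $i$ and we may simply take $b'_1$ to be any ratio that works, or rather argue directly that $b_1$ itself can be replaced — but the real content starts at $l\geq 2$, so I will set up the inductive step carefully. The key idea is to use the divisibility hypothesis on the $1\times 1$ minors (the entries $a_{ij}$ themselves) together with the larger minors. First I would dispose of the trivial cases: if every $a_{ij}=0$ the statement is vacuous (take $b'_j=0$), and if some column, say column $l$, is entirely zero, then that $b_l$ contributes nothing and we may drop it and apply induction on $l-1$ columns. So assume no column is identically zero.

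The heart of the argument is a single reduction step: I want to show that one of the $b_j$ can be cleared to an element of $R$ without changing any of the linear combinations $L_i := a_{i1}b_1+\cdots+a_{il}b_l$. Write each $b_j = p_j/s$ over a common denominator $s\in R\setminus\{0\}$ with $p_j\in R$. The hypothesis says $s \mid \sum_j a_{ij}p_j$ and moreover every nonzero minor of $(a_{ij})$ divides $\sum_j a_{ij}p_j$ in $R$. The plan is to exploit this by picking a column index and performing what is essentially a Cramer's-rule manipulation: after possibly reordering, suppose the submatrix on some rows using columns $\{1,\ldots,l\}$ has a nonzero maximal minor $\Delta$; then I would solve for the ``last'' variable in terms of the others using that minor, substitute back, and check — using that $\Delta$ and all smaller minors divide the $L_i$ — that the resulting expressions have denominators that cancel. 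Concretely, Cramer's rule expresses $\Delta \cdot b_j$ as an $R$-linear combination of the $L_i$ restricted to the relevant rows, so $\Delta b_j \in R$; combined with the divisibility of $L_i$ by $\Delta$ one can then show the substitution $b_j \mapsto b_j - (\text{something in }R)$ removes one ``fractional'' variable while preserving all $L_i$, reducing the effective number of columns and letting induction finish.

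The main obstacle I anticipate is handling the rank deficiency cleanly: the matrix $(a_{ij})$ need not have full column rank, so I cannot just invert an $l\times l$ submatrix, and I must be careful that when I use a maximal nonzero minor (of size $r = \operatorname{rank}$, say) the columns not participating in that minor are genuinely dependent on the participating ones \emph{over the quotient field}, with the dependence coefficients controlled by ratios of minors — this is where the hypothesis ``divisible by every nonzero minor'' does its real work, ensuring those ratios interact with the $L_i$ to give elements of $R$. Managing the bookkeeping of which rows and columns are chosen, and verifying that the replacement $b'_j$ lands in $R$ rather than merely in the quotient field, will be the delicate part; everything else is linear algebra over the fraction field pulled back along divisibility.
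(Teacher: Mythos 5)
Your proposal gestures at the right ingredients --- Cramer's rule and divisibility by minors --- but the pivotal claim in the inductive step is false, and without it the induction does not close.

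You assert that ``Cramer's rule expresses $\Delta \cdot b_j$ as an $R$-linear combination of the $L_i$ restricted to the relevant rows, so $\Delta b_j \in R$.'' This is only true when the $b_j$ are themselves the unique Cramer solution of a square nonsingular system. When the matrix has rank $r<l$ (the generic situation, and the relevant one in the paper's application, where $k$ is small and $l$ is large), the system $\sum_j a_{ij}x_j = L_i$ is underdetermined; the given $b_j$ involve arbitrary fractional values for the free variables, and $\Delta b_j$ need not lie in $R$. A concrete counterexample: $R=\mathbb{Z}$, $k=1$, $l=2$, $a_{11}=a_{12}=1$, $b_1=1/2$, $b_2=-1/2$. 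Every nonzero minor equals $1$, so the hypothesis is satisfied with $L_1=0$, but $\Delta b_1 = 1/2 \notin \mathbb{Z}$. (The lemma's conclusion still holds, of course, via $b_1'=b_2'=0$; the point is that you cannot reach it by manipulating the given $b_j$ as you propose.) A related problem, which you flag but do not resolve, is that fixing one variable to an element of $R$ and passing to $l-1$ columns does not preserve the hypothesis: the modified right-hand sides $L_i - a_{il}b_l'$ have no reason to be divisible by the nonzero minors of the truncated matrix, so the inductive hypothesis is not available.

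The paper's proof is direct and avoids induction entirely. After discarding linearly dependent rows (so that $k\le l$ and the rank equals $k$; a solution for an independent set of rows automatically satisfies the dependent ones) and permuting columns so that the leading $k\times k$ minor $\Delta$ is nonzero, one appends to the matrix $l-k$ extra rows equal to the last $l-k$ standard unit row vectors, and appends $l-k$ zeros to the right-hand side. This produces a square $l\times l$ system whose coefficient determinant is $\pm\Delta$ and which still contains the original $k$ equations. Its unique solution $(b_1',\dots,b_l')$ over the quotient field is, by Cramer's rule, $b_j'=\det(M_j)/\det(M)$; expanding $\det(M_j)$ along the replaced column exhibits it as an $R$-linear combination of $L_1,\dots,L_k$, each of which is divisible by $\Delta=\pm\det(M)$, so $b_j'\in R$. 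The idea your proposal is missing is precisely this appending trick: pin the extra variables to $0$ so as to obtain a square system whose determinant is a nonzero minor of the original matrix, letting the divisibility hypothesis apply directly.
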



\begin{proof}
We may assume that $k\leq l$ and that the matrix $(a_{ij})$ has maximal rank. We may then
assume that the submatrix $(a_{ij}: 1\leq i,j\leq k)$ has nonzero determinant. Now
let $(b'_1,\ldots,b'_l)$ be the unique solution in the quotient
 field of $R$ of the linear system
$$\begin{bmatrix}
a_{11}&a_{12}&\ldots&\ldots&a_{1l}\\
a_{21}&a_{22}&\ldots&\ldots&a_{2l}\\
.&.&\ldots&\ldots&.\\
a_{k1}&a_{k2}&\ldots&\ldots&a_{kl}\\
0&\ldots&1&\ldots&0\\
.&.&\ldots&\ldots&.\\
0&\ldots&0&\ldots&1
\end{bmatrix}
\begin{bmatrix}
x_1\\
x_2\\
.\\
.\\
.\\
.\\
x_l
\end{bmatrix}
=
\begin{bmatrix}
a_{11}b_1+\cdots+a_{1l}b_l\\
a_{21}b_2+\cdots+a_{2l}b_l\\
.\\
a_{k1}b_l+\cdots+a_{kl}b_l\\
0\\
.\\
0
\end{bmatrix}.
$$

Using Cramer's rule and the fact that the determinant of the coefficient matrix
divides every entry of the column vector on the right-hand side, we can conclude
that this solution is in $R$.
\end{proof}

\begin{proposition}\label{existence}
Let $S=k[t,x_1,\ldots,x_n]$ be a polynomial ring in $n+1$ variables
over a field $k$. Define the grading as in the beginning of the section. Let $f_1,\ldots,f_r$ be
homogeneous polynomials of positive degrees. Then for each positive integer $q$ there is a polynomial
\linebreak $h_q(t)$ in $k[t]$ such that the ideal $(x_1^q,x_2^q,\ldots,x_n^q,f_1,\ldots,f_r):_Sh_q(t)$
is primary to $(x_1,\ldots,x_n)$.

Moreover these polynomials $h_q$ can be chosen to have the following
property: there is a constant $c$ not depending on $q$ such that for
each decomposition $h_q(t)=\tau_1^{s_1}\tau_2^{s_2}\cdots \tau_l^{s_l}$
into a product of powers of distinct irreducible polynomials, the numbers
$s_j$ are bounded above by $cq^{n-1}$.
\end{proposition}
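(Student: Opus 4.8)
The plan is to forget the prime characteristic entirely — the statement concerns an arbitrary positive integer $q$ — and work with the $k[t]$-module structure. Write $J_q=(x_1^q,\dots,x_n^q,f_1,\dots,f_r)$ and $M_q=S/J_q$. Since each $f_j$ is homogeneous of positive degree it lies in $(x_1,\dots,x_n)$, so $(x_1^q,\dots,x_n^q)\subseteq J_q\subseteq(x_1,\dots,x_n)$ and hence $\sqrt{J_q}=(x_1,\dots,x_n)$; therefore every associated prime of $M_q$ is either $(x_1,\dots,x_n)$ or of the form $(x_1,\dots,x_n,\tau)$ with $\tau\in k[t]$ irreducible. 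The key reduction is: if $h(t)\in k[t]\setminus\{0\}$ annihilates the $k[t]$-torsion submodule $T_q$ of $M_q$, then $J_q:h$ is $(x_1,\dots,x_n)$-primary. Indeed $S/(J_q:h)\cong hM_q$, which (because $h$ kills $T_q$) is $k[t]$-torsion-free and nonzero — it contains the nonzero image of $h$ — so none of its associated primes can contain an irreducible of $k[t]$, leaving $(x_1,\dots,x_n)$ as the only possibility. Taking $h_q$ to be the largest invariant factor of $T_q$ as a $k[t]$-module already settles the existence part.

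For the quantitative claim I would bound the torsion of $M_q$ one graded degree at a time. The ambient module $A_q:=k[t,x_1,\dots,x_n]/(x_1^q,\dots,x_n^q)$ is free and graded, with $(A_q)_d$ free of rank $\dim_k(A_q)_d\le\binom{d+n-1}{n-1}$ and zero for $d>n(q-1)$, and $M_q$ is a graded quotient of it. Writing $d_j=\deg f_j$, the degree-$d$ piece $(M_q)_d$ is the cokernel of a matrix $B_d$ over $k[t]$ whose columns express the elements $f_jx^a$ (for $x^a$ a monomial basis element of $(A_q)_{d-d_j}$) in the monomial basis of $(A_q)_d$. Reducing $f_jx^a$ modulo $(x_1^q,\dots,x_n^q)$ shows that the nonzero entries of $B_d$ are, up to sign, coefficients of the polynomials $f_j$ viewed as polynomials in $t$; hence every entry of $B_d$ has $t$-degree at most a constant $\Delta$ depending only on $f_1,\dots,f_r$. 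Over the PID $k[t]$, the largest invariant factor of the torsion of $\mathrm{coker}(B_d)$ divides every $\rho_d\times\rho_d$ minor of $B_d$, where $\rho_d=\mathrm{rank}(B_d)\le\dim_k(A_q)_d$, and any such minor has $t$-degree at most $\rho_d\Delta$. Since $\dim_k(A_q)_d\le\binom{d+n-1}{n-1}\le c'q^{n-1}$ for all $d\le n(q-1)$, with $c'$ depending only on $n$, it follows that for every irreducible $\tau\in k[t]$ the exponent of $\tau$ in the largest invariant factor of $T_q=\bigoplus_d(T_q)_d$ is at most $\max_d(\rho_d\Delta)\le c'\Delta\,q^{n-1}$.

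To finish, I would take $h_q=\prod_\tau\tau^{\,e_\tau}$, the product over the finitely many irreducibles $\tau$ dividing the largest invariant factor of $T_q$, with $e_\tau$ its multiplicity there; then $h_q$ annihilates $T_q$, so $J_q:h_q$ is $(x_1,\dots,x_n)$-primary by the first paragraph, and the factorization of $h_q$ into powers of distinct irreducibles has all exponents at most $c'\Delta\,q^{n-1}$, so $c=c'\Delta$ works. The step requiring the most care — the main obstacle — is obtaining the bound uniformly in $q$ \emph{and} in the degree $d$ at once: $M_q$ itself has rank about $q^n$ over $k[t]$, but the point is that each graded slice is presented by a matrix whose entries have $q$-independent $t$-degree and whose size is only $O(q^{n-1})$, which is exactly what makes the minor / invariant-factor estimate produce the exponent $n-1$ rather than $n$. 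The remaining details (that $hM_q$ really is torsion-free, that $(N_q)_d=\sum_j f_j(A_q)_{d-d_j}$, and the elementary-divisor bookkeeping) are routine.
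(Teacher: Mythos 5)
Your proof is correct and reaches the same quantitative bound, but by a genuinely different route.  The paper defines $h_q(t)$ directly as the least common multiple of \emph{all} nonzero minors of the slice matrices $M_d$ ($1\le d\le n(q-1)$) and then proves that $(x_1^q,\dots,x_n^q,f_1,\dots,f_r):h_q(t)$ is $(x_1,\dots,x_n)$-primary by a hands-on computation: given $fg(t)h_q(t)\in J_q$, one compares coefficients degree by degree, divides through by $g(t)$, and invokes Lemma~\ref{minor} (a Cramer's-rule statement: a vector in the column span over the quotient field whose entries are divisible by every maximal minor is already in the column span over $k[t]$) to rewrite $fh_q(t)\in J_q$.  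You instead pass immediately to the $k[t]$-module $M_q=S/J_q$, characterize primariness of $J_q:h$ via torsion-freeness of $hM_q$, and take $h_q$ to be (essentially) the largest invariant factor of the torsion submodule.  Your verification that this works -- if $h$ kills $T_q$ and $ghm=0$ then $h^2m=0$ so $m\in T_q$ and $hm=0$ -- is clean, and the needed facts $\sqrt{J_q}=(x_1,\dots,x_n)$ and $h\notin J_q$ are in place, so the module-theoretic reduction is sound.  Both approaches hinge on the same structural observation: the degree-$d$ slice of $M_q$ is the cokernel of a matrix $B_d$ over $k[t]$ (the paper's $M_d$) of size $O(q^{n-1})$ whose entries are coefficients of the $f_j$, hence of $t$-degree bounded independently of $q$; the exponent $n-1$ comes from bounding $\dim_k(A_q)_d\le q^{n-1}$ uniformly in $d\le n(q-1)$.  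Where the paper uses the explicit minor/Cramer lemma to get both primariness and the degree bound from a single object (the LCM of all minors), you split the work: Smith normal form gives that the top invariant factor $e_{\rho_d}$ of $\operatorname{coker}(B_d)$ divides every $\rho_d\times\rho_d$ minor, which gives both the annihilation of torsion and the degree estimate at once.  Your argument is arguably more conceptual and produces a smaller $h_q$, at the cost of invoking the structure theory over the PID $k[t]$; the paper's is more elementary but requires the separate Lemma~\ref{minor} and a longer coefficient chase.
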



\begin{proof}
To simplify the notation let $\mbf{x}$ denote $x_1,\ldots,x_n$, and
for each $\mbf{u}=(u_1,\ldots,u_n)\in \mathbb{N}^n$, where $\mathbb{N}$ denotes the
set of non-negative integers, let $\mbf{x}^\mbf{u}$ stand for $x_1^{u_1}\cdots x_n^{u_n}$,
$|\mbf{u}|$ stand for the integer $u_1+u_2+\cdots+u_n$ and $||\mbf{u}||$ denote $\max\{u_i: i=1,\ldots,n\}$.

Let $\deg f_i=d_i$ for each $i=1,\ldots,r$ and write
$$
\ds f_i=\sum_{|\mbf{v}|=d_i}A_{i,\mbf{v}}\mbf{x}^\mbf{v},
$$
where $A_{i,\mbf{v}}\in k[t]$ for all $i$ and $\mbf{v}$. We call the $\mbf{x}^\mbf{v}$ the monomial terms of $f_i$.

For each positive integer $d$ we define the matrix $M_d$ as follows. The rows of
this matrix are indexed by the vectors $\mbf{u}\in \mathbb{N}^n$ with $|\mbf{u}|=d$
 and $||\mbf{u}||<q$. Its columns are indexed by the vectors
$\mbf{w}^i\in \mathbb{N}^n$ with $|\mbf{w}^i|=d-d_i$ and $i=1,\ldots,r$. For each such
$\mbf{u}$ and such $\mbf{w}^i$ the $(\mbf{u},\mbf{w}^i)$-entry is $A_{i,\mbf{v}}$
if $\mbf{w}^i+ \mbf{v}=\mbf{u}$ and is 0 otherwise. Now we define $h_q(t)$ to be
the least common multiple of all the non-zero minors of all matrices $M_d$
with $1\leq d\leq n(q-1)$.

It is clear that there is a number $\alpha$ such that the sizes of the matrices
$M_d$ with \linebreak $d=1,\ldots, n(q-1),$ are bounded above by $\alpha q^{n-1}$, for all $q$.
 Hence the nonzero minors of these matrices are polynomials in $t$ of degrees bounded above by $\beta q^{n-1}$
for some $\beta$ not depending on $q$. The polynomial $h_q(t)$ was defined to be the least common multiple
of all these polynomials, thus there is a positive integer  $c$ such that for every decomposition
$h_q(t)=\tau_1^{s_1}\tau_2^{s_2}\cdots \tau_l^{s_l}$ into a product of powers of distinct
irreducible polynomials, the numbers $s_j$ are bounded by $cq^{n-1}$, for all $q$.

The main task now is to show that for any $g(t)\in k[t]\setminus\{0\}$ we have
$$
(x_1^q,\ldots,x_n^q,f_1,\ldots,f_r):h_q(t)=(x_1^q,\ldots,x_n^q,f_1,\ldots,f_r):h_q(t)g(t).
$$ Let $f\in
(x_1^q,\ldots,x_n^q,f_1,\ldots,f_r):h_q(t)g(t)$. That means
$$
fg(t)h_q(t)\in (x_1^q,\ldots,x_n^q,f_1,\ldots,f_r).
$$
We need to show that $fh_q(t)\in (x_1^q,\ldots,x_n^q,f_1,\ldots,f_r)$.

Since $(x_1^q,\ldots,x_n^q,f_1,\ldots,f_r)$ is homogeneous, we may assume that $f$ is homogeneous
 as well. We may further assume that none of the monomial terms of $f$ is
divisible by any of $x_i^q$ and that the degree $d$ of $f$ is at most
$n(q-1)$. We have a presentation

$fg(t)h_q(t)$
\begin{eqnarray*}
&=&\sum b_ix_i^q+\Big(\sum_{|\mbf{w}|=d-d_1}c_{1,\mbf{w}}\mbf{x}^\mbf{w}\Big)f_1+
\cdots+\Big(\sum_{|\mbf{w}|=d-d_r}c_{r,\mbf{w}}\mbf{x}^\mbf{w}\Big)f_r \qquad (c_{j,\mbf{w}}\in k[t], b_i\in S)\\
&=&\sum b_ix_i^q+\sum_{|\mbf{u}|=d}\Big(\sum_{\mbf{w}+\mbf{v}=\mbf{u}}c_{1,\mbf{w}}A_{1,\mbf{v}}\Big)\mbf{x}^\mbf{u}+
\cdots+\sum_{|\mbf{u}|=d}\Big(\sum_{\mbf{w}+ \mbf{v}=\mbf{u}}c_{r,\mbf{w}}A_{r,\mbf{v}}\Big)\mbf{x}^\mbf{u}\\
&=&\sum b_ix_i^q+\sum_{|\mbf{u}|=d}\Big(\sum_{\mbf{w}+ \mbf{v}=\mbf{u}}c_{1,\mbf{w}}A_{1,\mbf{v}}+
\sum_{\mbf{w}+ \mbf{v}=\mbf{u}}c_{2,\mbf{w}}A_{2,\mbf{v}}+\cdots+\sum_{\mbf{w}+ \mbf{v}=
\mbf{u}}c_{r,\mbf{w}}A_{r,\mbf{v}}\Big)\mbf{x}^\mbf{u}\\
&=&\sum_{|\mbf{u}|=d, ||\mbf{u}||<q}\Big(\sum_{\mbf{w}+ \mbf{v}=\mbf{u}}c_{1,\mbf{w}}A_{1,\mbf{v}}+
\sum_{\mbf{w}+ \mbf{v}=\mbf{u}}c_{2,\mbf{w}}A_{2,\mbf{v}}+ \cdots+\sum_{\mbf{w}+ \mbf{v}=
\mbf{u}}c_{r,\mbf{w}}A_{r,\mbf{v}}\Big)\mbf{x}^\mbf{u}
\end{eqnarray*}
The last equality follows from the assumption that none of the monomial terms of $f$ is
divisible by any of $x_i^q$ for all $i$. It follows from this equality that for each
$\mbf{u}$ with $|\mbf{u}|=d$ and $||\mbf{u}||<q$ we have
$$
\sum_{\mbf{w}+ \mbf{v}=\mbf{u}}c_{1,\mbf{w}}A_{1,\mbf{v}}+\sum_{\mbf{w}+
\mbf{v}=\mbf{u}}c_{2,\mbf{w}}A_{2,\mbf{v}}+\cdots+\sum_{\mbf{w}+ \mbf{v}=\mbf{u}}c_{r,\mbf{w}}A_{r,\mbf{v}}\equiv 0
\qquad \big(\text{mod }g(t)h_q(t)\big),
$$
which implies that
$$
\sum_{\mbf{w}+ \mbf{v}=\mbf{u}}\frac{c_{1,\mbf{w}}}{g(t)}A_{1,\mbf{v}}+\sum_{\mbf{w}+
\mbf{v}=\mbf{u}}\frac{c_{2,\mbf{w}}}{g(t)}A_{2,\mbf{v}}+\cdots+\sum_{\mbf{w}+ \mbf{v}=
\mbf{u}}\frac{c_{r,\mbf{w}}}{g(t)}A_{r,\mbf{v}}
$$
is an element of $k[t]$ and is divisible by $h_q(t)$. Apply Lemma ~\ref{minor}
in this situation, with the matrix $(a_{ij})$ taken as the matrix $M_d$ defined earlier, and the $b_j$ taken
as the \linebreak $\frac{c_{j,\mbf{w}}}{g(t)}$. Note that by
the definition of $h_q(t)$,
the above sum is divisible by each nonzero minor of the matrix $(a_{ij})$. Therefore by Lemma ~\ref{minor} we
can find
$\{c'_{1,\mbf{w}}: |\mbf{w}|=d-d_1\},\ldots,\linebreak\{c'_{r,\mbf{w}}: |\mbf{w}|=d-d_r\}$
in $k[t]$ such that for each $\mbf{u}$ with $|\mbf{u}|=d$ and $||\mbf{u}||<q$,
$$
\sum_{\mbf{w}+ \mbf{v}=\mbf{u}}\frac{c_{1,\mbf{w}}}{g(t)}A_{1,\mbf{v}}+\cdots+
\sum_{\mbf{w}+ \mbf{v}=\mbf{u}}\frac{c_{r,\mbf{w}}}{g(t)}A_{r,\mbf{v}}=
\sum_{\mbf{w}+\mbf{v}=\mbf{u}}c'_{1,\mbf{w}}A_{1,\mbf{v}}+\cdots+\sum_{\mbf{w}+ \mbf{v}=
\mbf{u}}c'_{r,\mbf{w}}A_{r,\mbf{v}}.
$$
This in turn implies that
\begin{eqnarray*}
fh_q(t)&=&\sum_{|\mbf{u}|=d, ||\mbf{u}||<q}\Big(\sum_{\mbf{w}+ \mbf{v}=
\mbf{u}}c'_{1,\mbf{w}}A_{1,\mbf{v}}+\sum_{\mbf{w}+ \mbf{v}=\mbf{u}}c'_{2,\mbf{w}}A_{2,\mbf{v}}+
\cdots+\sum_{\mbf{w}+ \mbf{v}=\mbf{u}}c'_{r,\mbf{w}}A_{r,\mbf{v}}\Big)\mbf{x}^\mbf{u}\\
&=&\sum b'_ix_i^q+\sum_{|\mbf{u}|=d}\Big(\sum_{\mbf{w}+ \mbf{v}=\mbf{u}}c'_{1,\mbf{w}}A_{1,\mbf{v}}+
\sum_{\mbf{w}+ \mbf{v}=\mbf{u}}c'_{2,\mbf{w}}A_{2,\mbf{v}}+\cdots+\sum_{\mbf{w}+\mbf{v}=
\mbf{u}}c'_{r,\mbf{w}}A_{r,\mbf{v}}\Big)\mbf{x}^\mbf{u}\\
&=&\sum b'_ix_i^q+\sum_{|\mbf{u}|=d}\Big(\sum_{\mbf{w}+\mbf{v}=\mbf{u}}c'_{1,\mbf{w}}A_{1,\mbf{v}}\big)\mbf{x}^\mbf{u}+
\cdots+\sum_{|\mbf{u}|=d}\Big(\sum_{\mbf{w}+ \mbf{v}=\mbf{u}}c'_{r,\mbf{w}}A_{r,\mbf{v}}\Big)\mbf{x}^\mbf{u}\\
&=&\sum
b'_ix_i^q+\Big(\sum_{|\mbf{w}|=d-d_1}c'_{1,\mbf{w}}\mbf{x}^\mbf{w}\Big)f_1+\cdots+
\Big(\sum_{|\mbf{w}|=d-d_r}c'_{r,\mbf{w}}\mbf{x}^\mbf{w}\Big)f_r,
\end{eqnarray*}
where the $b_i'$ are polynomials in $S$. Hence $f\in (x_1^q,\ldots,x_n^q,f_1,\ldots,f_r):h_q(t)$.
\end{proof}


The main result of this section is the following

\begin{theorem} Let $k$ be a field of prime characteristic $p$. Fix $n\geq 2$ and set
$$
R=\frac{k[t,x_1,\ldots,x_n]}{(f_1,\ldots,f_r)},
$$
where $t,x_1,\ldots,x_n$ are variables and $f_1,\ldots,f_r$ are homogeneous
polynomials with respect to the grading in which $\deg t=0$ and $\deg x_i=1$ for $i=1,\ldots, n$.
Let $I=(x_1,\ldots,x_n)$. Then there is a constant $c$ such that for each $q=p^e$ there is a
primary decomposition
$$
I^{[q]}=Q_1\cap Q_2\cap\cdots\cap Q_m
$$
with the property that $(\sqrt{Q_i})^{c[q^{n-1}]}\subseteq Q_i$ for all
$i=1,\ldots,m$.
\end{theorem}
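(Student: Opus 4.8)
The plan is to assemble the theorem from Proposition~\ref{existence} and Lemma~\ref{stable}. As a preliminary reduction I would assume, as in those two results, that each $f_i$ has positive degree: a zero $f_i$ may be discarded, and an $f_i$ lying in $k[t]$ would make $R/I$ zero-dimensional, placing $I$ outside the dimension-one setting of this section.

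Next I would transfer the output of Proposition~\ref{existence} from $S=k[t,x_1,\ldots,x_n]$ to $R=S/(f_1,\ldots,f_r)$. Write $\pi: S\to R$ for the projection. For each $q=p^e$ the proposition supplies $h_q(t)\in k[t]$ such that
$$
\mf{Q}_q:=(x_1^q,\ldots,x_n^q,f_1,\ldots,f_r):_S h_q(t)
$$
is $(x_1,\ldots,x_n)S$-primary, together with a constant $c_0$ independent of $q$ bounding, in any factorization $h_q(t)=\tau_1^{s_1}\cdots\tau_l^{s_l}$ into powers of distinct irreducibles, each $s_j$ by $c_0 q^{n-1}$. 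Since $(x_1^q,\ldots,x_n^q,f_1,\ldots,f_r)=\pi^{-1}(I^{[q]})$ and $\mf{Q}_q\supseteq\ker\pi=(f_1,\ldots,f_r)$, colons descend along $\pi$, giving
$$
I^{[q]}:_R h_q(t)=\pi(\mf{Q}_q).
$$
Being the image of an $(x_1,\ldots,x_n)S$-primary ideal that contains $\ker\pi$, the ideal $\pi(\mf{Q}_q)$ is $(x_1,\ldots,x_n)R$-primary; note $(x_1,\ldots,x_n)R$ is a genuine prime ideal, since $R/(x_1,\ldots,x_n)R\cong k[t]$ because every $f_i$ has positive degree. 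Thus the hypothesis of Lemma~\ref{stable} holds with this $h_q$, and the lemma yields a primary decomposition
$$
I^{[q]}=Q\cap Q_1\cap\cdots\cap Q_l
$$
in which $Q$ is $(x_1,\ldots,x_n)R$-primary, each $Q_i=I^{[q]}+R\tau_i^{s_i}$ has radical $(x_1,\ldots,x_n,\tau_i)$, and $(\sqrt{Q_i})^{nq+s_i}\subseteq Q_i$ for $i=1,\ldots,l$.

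Finally I would combine the exponents. For the isolated component $Q$: since $I^{[q]}\subseteq Q$, a pigeonhole argument gives $(x_1,\ldots,x_n)^{n(q-1)+1}\subseteq(x_1^q,\ldots,x_n^q)R=I^{[q]}\subseteq Q$, so $(\sqrt{Q})^{nq}\subseteq Q$. For the embedded components, $s_i\le c_0 q^{n-1}$, and since $n\ge 2$ one has $q\le q^{n-1}$, whence $nq+s_i\le (n+c_0)q^{n-1}$. With $c:=n+c_0$ (independent of $q$) and the fact that a higher power of an ideal sits inside any lower power, we get $(\sqrt{Q})^{cq^{n-1}}\subseteq Q$ and $(\sqrt{Q_i})^{cq^{n-1}}\subseteq Q_i$ for all $i$; relabelling $Q,Q_1,\ldots,Q_l$ as $Q_1,\ldots,Q_m$ completes the argument. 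The only steps needing real care are the descent of the colon ideal to $R$ in the previous paragraph and the bookkeeping observation that the polynomial bound $s_i\lesssim q^{n-1}$ dominates the linear term $nq$ exactly because $n\ge 2$; the genuine difficulty — manufacturing the $h_q$ from the minors of the matrices $M_d$ and bounding their degrees by $O(q^{n-1})$ — is already handled inside Proposition~\ref{existence}, so it need not be revisited here.
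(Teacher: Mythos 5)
Your proof is correct and follows essentially the same strategy as the paper: assemble Proposition~\ref{existence} and Lemma~\ref{stable}, then collapse the resulting exponent bound $nq+s_i$ into a single $cq^{n-1}$ using $n\geq 2$. The one small difference is that the paper disposes of the isolated $(x_1,\ldots,x_n)$-primary component $Q$ by invoking Proposition~\ref{embedded}, while you give a direct pigeonhole bound $(x_1,\ldots,x_n)^{n(q-1)+1}\subseteq (x_1^q,\ldots,x_n^q)R\subseteq Q$; that is a more explicit but morally equivalent step, and your explicit descent of the colon ideal from $S$ to $R$ makes precise a transfer that the paper treats as implicit.
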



\begin{proof} We may assume that the $f_i$ are of positive degrees. Proposition
~\ref{existence} guarantees the existence of polynomials $h_q(t)\in k[t]$ such that for each
$q=p^e$ the ideal $I^{[q]}:h_q(t)$ is primary to the ideal $(x_1,\ldots,x_n)$. Moreover, there
is a constant $c$ with the property that for every irreducible decomposition
$$
h_q(t)=\tau_1^{s_1}\cdots \tau_l^{s_l},
$$
the exponents $s_j$ are bounded above by $cq^{n-1}$. Now the conclusion of the theorem
follows from Lemma ~\ref{stable} and Proposition ~\ref{embedded}.
\end{proof}


\begin{corollary} Let $k$ be a field of prime characteristic $p$ and consider
$$
R=\frac{k[t,x,y]}{(f_1,\ldots,f_r)},
$$
where $f_1,\ldots,f_r$ are homogeneous polynomials with respect to the grading in which $\deg t=0$ and
$\deg x=\deg y=1$. Then the Frobenius powers of the ideal $I=(x,y)$
have linear growth of primary decompositions.
\end{corollary}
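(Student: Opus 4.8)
The plan is to note that this corollary is simply the $n=2$ case of the preceding Theorem, so that the proof reduces to a short observation. First I would apply that Theorem to the ring $R=k[t,x,y]/(f_1,\ldots,f_r)$ with $n=2$ and $I=(x,y)$; the hypotheses are met verbatim, since $2\geq 2$ and the $f_i$ are homogeneous for the grading with $\deg t=0$ and $\deg x=\deg y=1$. The Theorem then yields a constant $c$, independent of $q$, such that for every $q=p^e$ there is a primary decomposition $I^{[q]}=Q_1\cap\cdots\cap Q_m$ with $(\sqrt{Q_i})^{c[q^{n-1}]}\subseteq Q_i$ for all $i$. Since $n-1=1$, this reads $(\sqrt{Q_i})^{c[q]}\subseteq Q_i$, which is exactly the definition of linear growth of primary decompositions given in the introduction. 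Hence there is nothing more to do.

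Should one want an argument not routed through the statement of the Theorem, I would instead isolate why the exponent $q^{n-1}$ appears and specialize it to $n=2$. In Proposition~\ref{existence} the bound $cq^{n-1}$ on the multiplicities $s_j$ ultimately comes from the bound $\alpha q^{n-1}$ on the sizes of the matrices $M_d$, whose rows are indexed by vectors $\mbf u\in\mathbb{N}^n$ with $|\mbf u|=d$ and $||\mbf u||<q$. For $n=2$ such a vector $\mbf u=(u_1,u_2)$ is determined by $u_1$, and the constraints $u_1<q$ and $u_2=d-u_1<q$ leave at most $q$ possibilities; thus each $M_d$ has at most $q$ rows, its nonzero minors are polynomials in $t$ of degree $O(q)$, and the least common multiple $h_q(t)$ has all of its multiplicities $s_j$ bounded by a constant multiple of $q$. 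Substituting this into Lemma~\ref{stable} together with Proposition~\ref{embedded}, exactly as in the proof of the Theorem, produces the decomposition $I^{[q]}=Q\cap Q_1\cap\cdots\cap Q_l$ with $(\sqrt{Q_i})^{cq+s_i}\subseteq Q_i$ and each $s_i=O(q)$, giving a uniform linear bound.

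I do not anticipate any real obstacle: the substantive work is already contained in Lemma~\ref{stable}, Proposition~\ref{existence}, and the Theorem, and the only care required is the trivial bookkeeping that $cq^{n-1}$ with $n=2$ is a constant times $q$, and that the constant produced by the Theorem does not depend on $q$.
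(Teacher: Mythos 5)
Your proof is correct and is exactly the intended one: the corollary is the $n=2$ specialization of the theorem immediately preceding it, and $q^{n-1}=q$ gives the linear bound. The paper gives no separate proof for the corollary, treating it as an immediate consequence, so your first paragraph already matches the paper's approach; the secondary argument tracing through Proposition~\ref{existence} and Lemma~\ref{stable} is harmless but unnecessary.
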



\section{Explicit primary decompositions of an example of Singh and Swanson}


Let $k$ be a field of prime characteristic $p$. It was proved in ~\cite{swanson-singh} that,
for the integral domain
$$
R=\frac{k[t,u,v,x,y]}{(u^2x^2+tuxvy+v^2y^2)},
$$
the set
$$
\bigcup_{q=p^e} \ass\text{ }\frac{R}{(x^q,y^q)}
$$
is infinite. By a slight modification of their proofs we show in this section that the same conclusion holds
for the Frobenius powers of the ideal $(u,v,x,y)$, namely that the set
$$
\bigcup_{q=p^e} \ass\text{ }\frac{R}{(u^q,v^q,x^q,y^q)}
$$
is also infinite. Therefore, as mentioned in the introduction of this paper, we want to
analyze primary decompositions of the Frobenius powers of the dimension one ideal \linebreak $I=(u,v,x,y)$
in this hypersurface. We show that they have linear growth property.

To prove the infiniteness of the set of associated primes
mentioned above, we need a modification of the Proposition 2.2 in ~\cite{swanson-singh}.


\begin{proposition}\label{modify}
Let $A$ be an $\mathbb{N}$-graded ring which is generated, as an
$A_0$-algebra, by \linebreak nonzerodivisors $t_1,\ldots,t_n$ of degree 1. Let
$R$ be the extension ring
$$
R=\frac{A[u_1,\ldots,u_n,x_1,\ldots,x_n]}{(u_1x_1-t_1,\ldots,u_nx_n-t_n)}.
$$
Let $m_1,\ldots,m_n$ be positive integers and $f\in A$ a homogeneous
element of degree $r$. Then whenever $k_1,\ldots,k_n$ are positive
integers such that $k_i+m_i > r$ for all $i$, we have
$$
(x_1^{m_1+k_1},\ldots, x_n^{m_n+k_n}, u_1^{m_1+k_1},\ldots, u_n^{m_n+k_n})R:_{A_0} fx_1^{k_1}\cdots
x_n^{k_n}=(t_1^{m_1},\ldots,t_n^{m_n})A:_{A_0} f.
$$
\end{proposition}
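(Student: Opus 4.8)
The plan is to prove the two containments separately, working in the ring $R$ where the relations $u_ix_i = t_i$ hold, and to exploit the $A_0$-linear structure on the right-hand sides so that Lemma~\ref{minor} can do the bookkeeping. First I would observe that since each $t_i$ is a nonzerodivisor in $A$ and $u_ix_i = t_i$ in $R$, the elements $x_i$ and $u_i$ are nonzerodivisors on $R$, and $R$ contains $A$ as a subring with $t_i = u_i x_i$. The easy containment is ``$\supseteq$'': if $a \in A_0$ satisfies $af \in (t_1^{m_1},\ldots,t_n^{m_n})A$, write $af = \sum_i g_i t_i^{m_i}$ with $g_i \in A$, multiply by $x_1^{k_1}\cdots x_n^{k_n}$, and use $t_i^{m_i} = u_i^{m_i} x_i^{m_i}$ to rewrite $af x_1^{k_1}\cdots x_n^{k_n}$ as a combination in which the $i$th term carries a factor $x_i^{m_i+k_i} u_i^{m_i}$; after redistributing the remaining $x_j$ and $u_i$ powers among the generators one checks this lands in $(x_1^{m_1+k_1},\ldots,x_n^{m_n+k_n},u_1^{m_1+k_1},\ldots,u_n^{m_n+k_n})R$. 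This is the same computation as in \cite{swanson-singh} and is essentially formal.

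The substantive direction is ``$\subseteq$''. Suppose $a \in A_0$ and $a f x_1^{k_1}\cdots x_n^{k_n} \in (x_1^{m_1+k_1},\ldots,x_n^{m_n+k_n},u_1^{m_1+k_1},\ldots,u_n^{m_n+k_n})R$. The strategy is to lift this to the polynomial ring $A[u_1,\ldots,u_n,x_1,\ldots,x_n]$, so that one has an honest equation
$$
a f x_1^{k_1}\cdots x_n^{k_n} = \sum_{i=1}^n p_i\, x_i^{m_i+k_i} + \sum_{i=1}^n r_i\, u_i^{m_i+k_i} + \sum_{i=1}^n q_i\,(u_ix_i - t_i)
$$
with $p_i, r_i, q_i$ in the polynomial ring, and then to analyze this equation monomial by monomial in the $u$'s and $x$'s with coefficients in $A$. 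Because the left-hand side involves no $u_j$ at all and involves $x_i$ only to the power $k_i < m_i+k_i$, collecting the terms that survive forces a system of $A$-linear relations among the coefficients $A_{i,\mathbf v}$ of $f$ (exactly the role played by the hypothesis $k_i + m_i > r$, which guarantees that the ``overflow'' powers produced by the relation $u_ix_i=t_i$ cannot be cancelled except through the $t_i^{m_i}$ terms). The hypothesis $k_i+m_i>r$ is what makes the relevant finite set of monomials closed under the substitution, so the linear algebra is finite-dimensional over $A$.

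The key step, and the one I expect to be the main obstacle, is the careful $u$-$x$ multidegree bookkeeping in that lifted equation: one must argue that after using the relations $u_ix_i=t_i$ to reduce every monomial to one in which, say, no $u_i$ and $x_i$ appear simultaneously, the degree constraint $k_i+m_i>r$ forces $af$ itself (an element of $A_0 \cdot f \subseteq A$) to be expressible as $\sum_i g_i t_i^{m_i}$. I would run this by setting up the coefficient matrix of the system exactly as in Proposition~\ref{existence}'s proof --- rows indexed by the relevant $u$-$x$ monomials, columns by the monomial terms of $f$ together with the generators $t_i^{m_i}$ --- and invoking Lemma~\ref{minor} to replace field-of-fractions solutions by genuine $A$-coefficients, so that the divisibility conditions on the minors are automatically met. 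Once $af \in (t_1^{m_1},\ldots,t_n^{m_n})A$ is obtained, intersecting with $A_0$ (which is harmless since $a, f$ are already controlled in degree) finishes the containment. The remaining details --- verifying homogeneity reductions, that we may take $f$ and all data homogeneous, and that passing between $R$ and its polynomial presentation does not lose anything because the $t_i$ are nonzerodivisors --- are routine and parallel to \cite{swanson-singh}.
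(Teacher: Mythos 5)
Your proposal misses the one genuine idea in the paper's proof and substitutes an inappropriate tool for it. The paper's argument for ``$\subseteq$'' is short: it defines a $\mathbb{Z}^{n+1}$-grading on $R$ by $\deg x_i = e_i$, $\deg u_i = e_{n+1} - e_i$, and $\deg h = re_{n+1}$ for $h \in A_r$. Under this grading the $e_{n+1}$-component of every nonzero homogeneous element of $R$ is nonnegative, while the $e_{n+1}$-component of $hf x_1^{k_1}\cdots x_n^{k_n}$ is $r$. If $b_i u_i^{m_i+k_i}$ is to be homogeneous of that degree, then $b_i$ must have $e_{n+1}$-degree $r - (m_i+k_i) < 0$ by the hypothesis $k_i + m_i > r$, so $b_i = 0$. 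Once the $u_i^{m_i+k_i}$ terms are gone, the assertion reduces exactly to \cite[Proposition 2.2]{swanson-singh}, which is cited, not reproved. That two-step reduction (multigrading to kill the $u$-terms, then cite Singh--Swanson) is the whole proof.

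Your proposal never isolates this degree obstruction. You gesture at ``$u$-$x$ multidegree bookkeeping'' and say the hypothesis $k_i+m_i>r$ ``guarantees that the overflow powers cannot be cancelled,'' but you never set up the grading that makes this a one-line argument, nor do you reduce to the known $u$-free statement. Worse, you propose to carry the load with Lemma~\ref{minor}, which is the wrong tool: that lemma converts a solution in the fraction field to one in $R$ once divisibility of all nonzero minors by the relevant sums is \emph{already known}, and it is invoked in Proposition~\ref{existence} precisely because $h_q(t)$ was \emph{defined} to make that divisibility hold. Here there is no analogous polynomial, the coefficients $A_{i,\mathbf v}$ you refer to do not appear in Proposition~\ref{modify}'s setting, and the divisibility hypotheses of Lemma~\ref{minor} are not ``automatically met'' --- they are not met at all. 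So while the ``$\supseteq$'' direction and the general homogeneity reductions you describe are fine, the substantive containment is left without a working mechanism.
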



\begin{proof}
The inclusion $\supseteq$ is obvious. Define a
$\mathbb{Z}^{n+1}$-grading in $R$ as follows: $\deg x_i=e_i$, \linebreak
$\deg u_i=e_{n+1}-e_i$ and $\deg h=re_{n+1}$ if $h\in A_r$, where $e_i=(0,\ldots, 1,\ldots, 0)$
is the $i$th standard basis element of
$\mathbb{Z}^{n+1}$. Now let $h\in A_0$ be such that
$$hfx_1^{k_1}\cdots x_n^{k_n}\in (x_1^{m_1+k_1},\ldots,x_n^{m_n+k_n}, u_1^{m_1+k_1},\ldots, u_n^{m_n+k_n})R.$$
Then there are homogeneous elements $c_1,\ldots,c_n,b_1,\ldots,b_n$
in $R$ satisfying
$$
hfx_1^{k_1}\cdots x_n^{k_n}=c_1x_1^{m_1+k_1}+\cdots + c_nx_n^{m_n+k_n}+ b_1u_1^{m_1+k_1}
+ \cdots +b_nu_n^{m_n+k_n}.
$$
The degree of the element on the left is
$re_{n+1}+k_1e_1+\cdots+k_ne_n=(k_1,\ldots,k_n,r).$ By the
assumption that $k_i+m_i>r$ for all $i$ we can easily see that
$b_1=\cdots =b_n=0$. The proposition now follows from
\cite[Proposition 2.2]{swanson-singh}.
\end{proof}


We will also make use of the following result from ~\cite{swanson-singh}.

\begin{lemma}\label{ss}\cite[Lemma 4.4(3) and Lemma 3.3]{swanson-singh} Consider the polynomial
ring $k[t,x,y]$. Define the sequence of polynomials $P_0=1, P_1=t, P_{n+1}=tP_n-P_{n-1}$ for $n=1,2,\ldots$.

(a) For any positive integer $n$, one has
$$
(x^n,y^n,x^2+txy+y^2):_{k[t]}xy^{n-1}=(P_{n-1}).
$$

(b) If char $k=p>0$ then the set $\{P_{p^e-2}: e\in\mathbb{N}\}$ has infinitely many irreducible factors.
\end{lemma}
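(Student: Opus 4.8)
Since Lemma~\ref{ss} repackages \cite[Lemma 4.4(3) and Lemma 3.3]{swanson-singh}, the cleanest option is simply to cite those results; if one wanted a self-contained argument, here is how I would proceed. For part (a), the plan is to pass to the ring $R=k[t,x,y]/(f)$ with $f=x^2+txy+y^2$, which is free over $k[t,y]$ with basis $\{1,x\}$ since $f$ is monic of degree $2$ in $x$. First I would prove, by induction on $n$ using $x^2\equiv -txy-y^2$ together with the recurrence $P_n=tP_{n-1}-P_{n-2}$, the reduction formula
$$
x^{n}\equiv(-1)^{n-1}\bigl(P_{n-1}\,xy^{n-1}+P_{n-2}\,y^{n}\bigr)\pmod f .
$$
Reading this modulo $y^n$ gives at once that $P_{n-1}xy^{n-1}\in(x^{n},y^{n},f)$, so $(P_{n-1})$ is contained in the colon ideal, and also that $(x^{n},y^{n})R=(P_{n-1}xy^{n-1},y^{n})R$. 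For the reverse inclusion I would take $g(t)\in k[t]$ with $g\,xy^{n-1}\in(x^{n},y^{n})R$, write $g\,xy^{n-1}=aP_{n-1}xy^{n-1}+by^{n}$ with $a=a_{0}+a_{1}x$, $b=b_{0}+b_{1}x$ and $a_{i},b_{i}\in k[t,y]$, expand using $x^{2}\equiv -txy-y^{2}$, and compare the coefficient of $x$ in the free basis; after cancelling $y^{n-1}$ this yields $g=a_{0}P_{n-1}+(\text{a multiple of }y)$ in $k[t,y]$, and setting $y=0$ gives $g=a_{0}(t,0)P_{n-1}(t)\in(P_{n-1})$.

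For part (b) the plan is to exploit that $P_n$ is a Chebyshev-type polynomial: writing $t=\alpha+\alpha^{-1}$ one has $P_n=(\alpha^{n+1}-\alpha^{-(n+1)})/(\alpha-\alpha^{-1})$, so the roots of $P_n$ in $\overline{\mathbb F_p}$ are precisely the elements $\zeta+\zeta^{-1}$ with $\zeta^{2(n+1)}=1$, $\zeta^{2}\neq 1$. For $n=p^{e}-2$ these involve $(p^{e}-1)$-st roots of unity. I would then invoke the elementary fact that for every prime $\ell\notin\{2,p\}$ the multiplicative order $g_{\ell}=\mathrm{ord}_{\ell}(p)$ satisfies $p^{g_{\ell}}>\ell$, hence $g_{\ell}>\log_{p}\ell\to\infty$ as $\ell\to\infty$; picking $\zeta$ of exact order $\ell$ (possible since $\ell\mid p^{g_{\ell}}-1$) produces a root $\zeta+\zeta^{-1}$ of $P_{p^{g_{\ell}}-2}$ whose degree over $\mathbb F_{p}$ is at least $g_{\ell}/2$, because $\zeta$ satisfies a quadratic over $\mathbb F_p(\zeta+\zeta^{-1})$. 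Thus, among the $P_{p^{e}-2}$ there occur roots of unbounded degree over $\mathbb F_p$, hence infinitely many distinct roots; since each $P_n$ lies in $\mathbb F_{p}[t]$, the minimal polynomial over $k$ of such a root divides $P_{p^{e}-2}$, and since the roots lie in $\mathbb F_p$-extensions of unbounded degree these minimal polynomials over $k$ are pairwise distinct (they are distinct even if $\overline{\mathbb F_p}\subseteq k$, being distinct linear factors). This gives infinitely many distinct irreducible factors.

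The genuinely delicate part is (b): the count is transparent over $\mathbb F_p$, but one must verify that passing to a general field $k$ of characteristic $p$ does not collapse it — it does not, because $k\cap\overline{\mathbb F_p}$ is some $\mathbb F_{p^{r}}$ and degrees over $\mathbb F_{p^{r}}$ still grow without bound — and one should note that the possible non-reducedness of $P_{p^{e}-2}$ when $p=2$ is irrelevant since only \emph{distinct} irreducible factors are counted; the number-theoretic input that $\mathrm{ord}_{\ell}(p)\to\infty$ along the primes is the essential ingredient. Part (a), by contrast, is routine bookkeeping once the reduction formula for $x^{n}$ modulo $f$ is established. As both statements are available in \cite{swanson-singh}, we simply cite them.
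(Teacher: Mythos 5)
The paper offers no proof of Lemma~\ref{ss}; it simply cites \cite[Lemma 4.4(3) and Lemma 3.3]{swanson-singh}, so there is no argument in the paper against which to compare yours. Your self-contained proof is correct. For part~(a), the reduction formula $x^n\equiv(-1)^{n-1}(P_{n-1}xy^{n-1}+P_{n-2}y^n)\pmod{f}$ follows by the induction you indicate (with the natural convention $P_{-1}=0$ to cover $n=1$), and the free-basis argument over $k[t,y]$ with basis $\{1,x\}$ correctly pins down the $k[t]$-colon: comparing the coefficient of $x$ gives $g=a_0P_{n-1}+(\text{multiple of }y)$ in $k[t,y]$, and specializing $y=0$ lands $g$ in $(P_{n-1})$. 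For part~(b), the identification $P_n(\alpha+\alpha^{-1})=(\alpha^{n+1}-\alpha^{-(n+1)})/(\alpha-\alpha^{-1})$ is verified by the recurrence, the roots of $P_{p^e-2}$ are exactly $\zeta+\zeta^{-1}$ with $\zeta^{2(p^e-1)}=1$, $\zeta^2\neq1$, and since $\mathrm{ord}_\ell(p)\to\infty$ over primes $\ell\neq2,p$ you obtain roots of unbounded degree over $\mathbb{F}_p$, hence infinitely many distinct roots; passing to a general $k$ of characteristic $p$ costs nothing because each irreducible factor of any fixed $P_{p^e-2}$ over $k$ has only finitely many roots, so infinitely many roots force infinitely many irreducible factors. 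You correctly flag the two points that could trip one up (the possible non-reducedness of $P_{2^e-2}$ in characteristic $2$, which is irrelevant for counting distinct factors, and the passage from $\mathbb{F}_p$ to $k$). This is essentially the same circle of ideas one would expect from the Singh--Swanson source, so the proposal both certifies the citation and makes it self-contained.
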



\begin{theorem}\label{infinite}
Let $k$ be a field of prime characteristic $p$ and
$$
R=\frac{k[t,u,v,x,y]}{(u^2x^2+tuxvy+v^2y^2)}.
$$
Then the set
$$
\bigcup_{q=p^e} Ass\text{ }\frac{R}{(u^q,v^q,x^q,y^q)}
$$
is infinite.
\end{theorem}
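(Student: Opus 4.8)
The plan is to combine Proposition~\ref{modify} with Lemma~\ref{ss} after a change of variables that puts the hypersurface into the shape required by Proposition~\ref{modify}, and then to manufacture associated primes by localizing at each irreducible factor of the resulting Chebyshev-type polynomial. First I would substitute $t_1=ux$, $t_2=vy$: the relation becomes $u^2x^2+tuvxy+v^2y^2=t_1^2+tt_1t_2+t_2^2$, so
$$
R=\frac{A[u,v,x,y]}{(ux-t_1,\ vy-t_2)},\qquad A=\frac{k[t][t_1,t_2]}{(t_1^2+tt_1t_2+t_2^2)},
$$
where $A$ is $\mathbb{N}$-graded, $A_0=k[t]$, and $\deg t_1=\deg t_2=1$. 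Since $t_1^2+tt_1t_2+t_2^2$ is irreducible over $k(t)$ (for $p\neq 2$ its discriminant $t^2-4$ is not a square in $k(t)$; for $p=2$ one checks $Z^2+tZ+1$ has no root in $k(t)$), the ring $A$ is a domain and $t_1,t_2$ are nonzerodivisors, so $R$ and $A$ satisfy the hypotheses of Proposition~\ref{modify} with $n=2$ and $u_1=u,u_2=v,x_1=x,x_2=y$.

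Next, fix $q=p^e$ and apply Proposition~\ref{modify} with $m_1=m_2=q-1$, $k_1=k_2=1$, and $f=t_1t_2^{q-2}\in A$, which is homogeneous of degree $r=q-1$; the hypothesis $k_i+m_i>r$ reads $q>q-1$. Writing $\theta_q$ for the image in $R$ of $fx_1^{k_1}x_2^{k_2}=fxy=uv^{q-2}x^2y^{q-1}$, Proposition~\ref{modify} gives
$$
(u^q,v^q,x^q,y^q)R:_{k[t]}\theta_q=(t_1^{q-1},t_2^{q-1})A:_{k[t]}t_1t_2^{q-2}.
$$
Lifting the right-hand side to $k[t,t_1,t_2]$ turns it into $(t_1^{q-1},t_2^{q-1},\,t_1^2+tt_1t_2+t_2^2):_{k[t]}t_1t_2^{q-2}$, which by Lemma~\ref{ss}(a), with $x,y$ replaced by $t_1,t_2$ and $n=q-1$, equals $(P_{q-2})$. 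Thus $(u^q,v^q,x^q,y^q)R:_{k[t]}\theta_q=(P_{q-2})$.

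Finally I would extract associated primes. Put $J_q=(u^q,v^q,x^q,y^q)R$ and let $\pi\in k[t]$ be an irreducible factor of $P_{q-2}$, of multiplicity $a\geq 1$. Localizing at $W=k[t]\setminus(\pi)$ commutes with the colon, so $(W^{-1}J_q:_{W^{-1}R}\theta_q)\cap k[t]_{(\pi)}=(\pi^{a})$; hence $\pi^{a-1}\theta_q$ is a nonzero element of $W^{-1}R/W^{-1}J_q$ annihilated by $\pi$. Any associated prime $\mf{P}$ of $W^{-1}R/W^{-1}J_q$ containing the annihilator of this element then contains $\pi$, so its contraction to $R$ is a prime $\mf{p}\in\ass R/J_q$ with $\mf{p}\cap k[t]=(\pi)$. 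Therefore every irreducible factor of $P_{q-2}$ is realized as $\mf{p}\cap k[t]$ for some $\mf{p}\in\ass R/(u^q,v^q,x^q,y^q)$, and since, by Lemma~\ref{ss}(b), the family $\{P_{p^e-2}:e\in\mathbb{N}\}$ has infinitely many irreducible factors altogether, the set $\bigcup_{q=p^e}\ass R/(u^q,v^q,x^q,y^q)$ is infinite.

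The only delicate point I expect is the last step: pulling a single irreducible factor out of the global colon ideal would not guarantee infinitely many distinct associated primes, so one is forced to localize at each $(\pi)$ separately in order to realize \emph{all} the factors of $P_{q-2}$. Everything else is an organizational matter, namely spotting the substitution $t_1=ux$, $t_2=vy$ and the exponent choice $m_i=q-1$, $k_i=1$ that make Proposition~\ref{modify} and Lemma~\ref{ss}(a) fit together; the rest is a routine adaptation of the argument of Singh and Swanson in~\cite{swanson-singh}.
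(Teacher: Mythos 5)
Your argument is correct and follows essentially the same path as the paper: identify $R$ as an extension of $A=k[t][a,b]/(a^2+tab+b^2)$ via $ux=a$, $vy=b$, apply Proposition~\ref{modify} with $m_i=q-1$, $k_i=1$, $f=ab^{q-2}$ to reduce the colon to $(a^{q-1},b^{q-1})A:_{k[t]}ab^{q-2}$, and invoke Lemma~\ref{ss} to get $(P_{q-2})$ and then infinitely many irreducible factors. The only (harmless) divergence is the final extraction step, where the paper instead sandwiches the colon ideal $J_q=(u^q,v^q,x^q,y^q):_R\theta_q$ between $(u^q,v^q,x^q,y^q,P_{q-2})$ and $(u,v,x,y,P_{q-2})$ to read off $\Min J_q=\Min(u,v,x,y,P_{q-2})$ directly and then uses $\ass R/J_q\subseteq\ass R/(u^q,v^q,x^q,y^q)$, whereas you localize at each $(\pi)$ separately; both routes are valid and yield the same associated primes.
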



\begin{proof} For each $q=p^e$ define the ideal
$$
J_q=(u^q,v^q,x^q,y^q)R:_R(ux)(vy)^{q-2}uv.
$$
Set $A_0=k[t]$ and $A=\ds\frac{k[t][a,b]}{a^2+tab+b^2}$. With this setting then the ring R can be presented as
$$
R=\ds\frac{A[u,v,x,y]}{(ux-a,vy-b)}.
$$
Thus applying Proposition \ref{modify} with $f=ab^{q-2}\in A_{q-1}$, we obtain
\begin{eqnarray*}
J_q\cap k[t]&=&(u^q,v^q,x^q,y^q)R:_{k[t]} (ux)(vy)^{q-2}uv\\
&=&(a^{q-1},b^{q-1})A:_{k[t]}ab^{q-2}\\
&=&(P_{q-2}) \qquad\text{by Lemma}~\ref{ss} (a).
\end{eqnarray*}
This implies that
$$(u^q,v^q,x^q,y^q,P_{q-2})\subseteq J_q\subseteq (u,v,x,y,P_{q-2}).
$$
Hence $\Min(J_q)=\Min(u,v,x,y,P_{q-2})$. The set $\bigcup_{e\in\mathbb{N}_+}\Min(u,v,x,y,P_{p^e-2})$ is
 infinite since the set of irreducible factors of the members of $\{P_{p^e-2}: e\in\mathbb{N}_+\}$ is infinite by
Lemma~\ref{ss}(b). Thus the set
$$
\bigcup_{q=p^e}\ass
\frac{R}{(u^q,v^q,x^q,y^q)}
$$
is infinite as well.
\end{proof}


We now try to analyze primary decompositions of the Frobenius powers of the ideal
\linebreak $I=(u,v,x,y)$ in the ring of Theorem~\ref{infinite}. We will actually work in a more general context, namely
in the hypersurfaces
$$
R=\frac{k[t,u,v,x,y]}{(r_0u^2x^2+r_1uxvy+r_2v^2y^2)}
$$
where $r_0,r_1,r_2$ are polynomials in $k[t]$. We will show that the Frobenius powers of the ideal
$I=(u,v,x,y)$ have linear growth of primary decompositions for a large class of polynomials $r_0,r_1,r_2$,
which includes the example above.

For each set of polynomials $r_0,r_1,r_2$ in $k[t]$ we define a generalization of the sequence of
polynomials defined in Lemma~\ref{ss} as follows
\begin{eqnarray*}
P_0&=&1,\\
P_1&=&r_1,\\
P_{n+1}&=&r_1P_n-r_0r_2P_{n-1} \text{ for all } n\geq 1.
\end{eqnarray*}
Note that $P_n$ is just the determinant of the $n\times n$ matrix
$$M_n=\begin{pmatrix}
r_1&r_0&&&\\
r_2&r_1&r_0&&\\
&.&.&.&\\
&&r_2&r_1&r_0\\
&&&r_2&r_1
\end{pmatrix}.
$$

The following lemma generalizes Lemma 4.4(1) from \cite{swanson-singh}.


\begin{lemma}\label{colon}

Consider the polynomial ring $k[t,x,y]$  and an integer $n\geq 1$. Then
$$
 xy^{n-1}P_{n-1}\in (x^n,y^n,r_0x^2+r_1xy+r_2y^2).
$$
\end{lemma}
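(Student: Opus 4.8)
The plan is to work in the polynomial ring $k[t,x,y]$ and argue directly with the recursion defining the $P_n$, essentially reducing the general statement to a telescoping identity. Write $F = r_0x^2 + r_1xy + r_2y^2$. I want to produce, for each $n \geq 1$, an explicit expression
$$
xy^{n-1}P_{n-1} = a_n x^n + b_n y^n + c_n F
$$
with $a_n, b_n, c_n \in k[t,x,y]$. The natural approach is induction on $n$. The base cases $n=1$ (where $xy^0 P_0 = x \in (x,y,F)$ trivially) and $n=2$ (where $xy P_1 = r_1 xy = F - r_0x^2 - r_2y^2 \in (x^2,y^2,F)$) should be immediate.

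For the inductive step, the key is to relate $xy^{n-1}P_{n-1}$ to the two previous instances using $P_{n} = r_1 P_{n-1} - r_0 r_2 P_{n-2}$. First I would multiply the inductive hypothesis for index $n-1$, namely $xy^{n-2}P_{n-2} \in (x^{n-1}, y^{n-1}, F)$, by $y$ to get $xy^{n-1}P_{n-2} \in (x^{n-1}y, y^n, F) \subseteq (x^{n-1}y, y^n, F)$; then I would use the relation $r_1 \cdot xy^{n-1} = y^{n-2}(F - r_0x^2 - r_2y^2)\cdot \tfrac{x}{?}$ — more carefully, from $F = r_0x^2 + r_1xy + r_2y^2$ I have $r_1 xy = F - r_0x^2 - r_2y^2$, so $r_1 xy^{n-1}P_{n-1} = y^{n-2}P_{n-1}(F - r_0x^2 - r_2y^2)$. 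The term $y^{n-2}P_{n-1}F$ is in $(F)$; the term $y^{n-2}P_{n-1}r_2 y^2 = r_2 y^n P_{n-1}$ is in $(y^n)$; and the remaining term is $-r_0 x^2 y^{n-2}P_{n-1} = -r_0 x \cdot (xy^{n-2}P_{n-1})$. Now I am stuck with $xy^{n-2}P_{n-1}$, which is one index off from the inductive hypothesis, so I would instead set up the induction to carry the hypothesis for \emph{two} consecutive values, $n$ and $n-1$, simultaneously — i.e. strong induction tracking both $xy^{n-1}P_{n-1}$ and $xy^{n-2}P_{n-1}$ (or the symmetric partner $x^{n-1}y P_{n-1}$), so that the recursion $P_n = r_1 P_{n-1} - r_0 r_2 P_{n-2}$ can be fed in cleanly.

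The cleanest version, which I would ultimately pursue, is to prove the slightly stronger pair of statements by simultaneous induction: $xy^{n-1}P_{n-1} \in (x^n, y^n, F)$ \emph{and} $x^{n-1}y P_{n-1} \in (x^n, y^n, F)$ (these are related by the $x \leftrightarrow y$, $r_0 \leftrightarrow r_2$ symmetry of $F$ and the matrix $M_n$, so the second follows from the first applied to the reversed data). Starting from $xy^{n}P_{n} = xy^n(r_1 P_{n-1} - r_0 r_2 P_{n-2})$, substitute $r_1 xy = F - r_0 x^2 - r_2 y^2$ into the first term to get $xy^n r_1 P_{n-1} = y^{n-1}P_{n-1}(F - r_0x^2 - r_2y^2) = y^{n-1}P_{n-1}F - r_0 x^2 y^{n-1}P_{n-1} - r_2 y^{n+1}P_{n-1}$; modulo $(x^{n+1}, y^{n+1}, F)$ the first and third drop, leaving $-r_0 x^2 y^{n-1} P_{n-1} = -r_0 x \cdot (x y^{n-1} P_{n-1})$, which by the hypothesis for index $n$ lies in $x\cdot(x^n,y^n,F) \subseteq (x^{n+1}, x y^n, F)$, and a further application of the companion statement $x y^n P_{?}$ handles the cross term $xy^n$; meanwhile the second term $-r_0 r_2 \cdot xy^n P_{n-2}$ is killed after multiplying the index-$(n-1)$ companion hypothesis by suitable powers of $y$.

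The main obstacle I anticipate is precisely the bookkeeping of these cross terms: after reducing modulo $(x^{n+1},y^{n+1})$ one repeatedly lands on expressions like $xy^n \cdot (\text{something})$ that are \emph{not} yet in the target ideal and require re-invoking the inductive hypothesis at a shifted index, so the induction must be set up to carry enough companion statements (the $x^{k}y^{n-k}P_{n-1}$ family, or at least the two extremes) to close the loop. Once the correct strengthened statement is identified, each step is a routine substitution using only $r_1xy \equiv -r_0x^2 - r_2y^2 \pmod F$ and the defining recursion for $P_n$; I do not expect any genuinely hard algebra, only care in organizing the induction.
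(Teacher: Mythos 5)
Your proposal and the paper's proof take genuinely different routes, and yours has a real gap that the paper's method sidesteps entirely.

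The paper argues as follows: with the grading $\deg t = 0$, $\deg x = \deg y = 1$, the degree-$n$ component $W_n$ of $k[t,x,y]/(x^n,y^n,F)$ (where $F = r_0x^2+r_1xy+r_2y^2$) is a $k[t]$-module generated by the $n-1$ monomials $x^{n-1}y,\ldots,xy^{n-1}$, and the relations coming from $x^{n-2-i}y^{i}F \equiv 0$, $i=0,\ldots,n-2$, form a \emph{square} presentation matrix which is precisely $M_{n-1}$. Since $\mathrm{adj}(M_{n-1})M_{n-1}=\det(M_{n-1})\,\mathrm{Id}$, the determinant $P_{n-1}$ annihilates the cokernel, i.e.\ $P_{n-1}\cdot x^{n-j}y^{j}\in(x^n,y^n,F)$ for \emph{every} $j$. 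That one-shot Fitting-ideal/adjugate observation proves the whole family simultaneously, and the recursion $P_{n+1}=r_1P_n-r_0r_2P_{n-1}$ never needs to be invoked in the proof --- it is simply the Laplace expansion of $\det M_n$.

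Your induction, by contrast, does not close, and the obstruction you flag is fatal as written, not merely a matter of bookkeeping. Set $G_n=(x^n,y^n,F)$. After substituting $r_1xy\equiv -r_0x^2-r_2y^2 \pmod F$ and the recursion, your inductive step reduces (say) $xy^nP_n$ modulo $G_{n+1}$ to terms of the form $-r_0\,x\cdot(xy^{n-1}P_{n-1})$ and $-r_0r_2\,y^2\cdot(xy^{n-2}P_{n-2})$. Applying the inductive hypotheses only gives $xy^{n-1}P_{n-1}\in G_n$ and $xy^{n-2}P_{n-2}\in G_{n-1}$, hence membership in $xG_n=(x^{n+1},\,xy^n,\,xF)$ and $y^2G_{n-1}=(x^{n-1}y^2,\,y^{n+1},\,y^2F)$ respectively. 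But $xG_n\not\subseteq G_{n+1}$ (because of $xy^n$) and $y^2G_{n-1}\not\subseteq G_{n+1}$ (because of $x^{n-1}y^2$), so you are left with uncontrolled multiples of $xy^n$ and $x^{n-1}y^2$. Enlarging the inductive hypothesis to the full family $x^{n-j}y^{j}P_{n-1}\in G_n$, $j=1,\ldots,n-1$, does not help: multiplying any member of $G_n$ by $x$ or $y$ still produces the cross terms $xy^n$ or $x^ny$, which lie in $G_{n+1}$ only in degenerate cases. There is no cancellation available from the recursion alone, since the polynomial coefficients produced by the hypothesis (the ``$A,B,C$'' in $x^{n-j}y^jP_{n-1}=Ax^n+By^n+CF$) are not controlled by your setup. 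So the step ``a further application of the companion statement handles the cross term'' is precisely the gap: no companion statement of the proposed form does handle it. To rescue an inductive argument you would need to track, in the hypothesis, explicit information about the $x^n$- and $y^n$-coefficients in the decompositions (equivalently, re-derive the adjugate identity column by column), at which point you have essentially reconstructed the paper's determinantal argument in a more laborious form.
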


\begin{proof}
With the grading $\deg t=0, \deg x=\deg y=1$, the module
$$
W=\frac{k[t,x,y]}{(x^n,y^n,r_0x^2+r_1xy+r_2y^2)}
$$
is $\mathbb{N}$-graded. Its $n$th graded component $W_n$, considered as module over $k[t]$, is generated by
$$
x^{n-1}y,x^{n-2}y^2,\ldots,xy^{n-1}.
$$
The relations among these generators come from the equations
$$
x^{n-2-i}y^i(r_0x^2+r_1xy+r_2y^2)=0, \qquad i=0,\ldots, n-2.
$$
This implies that the relation matrix for $W_n$, as a $k[t]$-module, is precisely the
matrix $M_{n-1}$ defined above. Therefore the determinant of this matrix, which is
$P_{n-1}$, annihilates $W_n$, and that implies the conclusion of the lemma.
\end{proof}


We use this lemma to prove the following

\begin{lemma}\label{inclusion} Let $k$ be any field and $S=k[t,u,v,x,y]$. For each integer $n\geq 0$ define
$$
I_n=(u^n,v^n,x^n,y^n,r_0u^2x^2+r_1uxvy+r_2v^2y^2).
$$
Assume furthermore that $2\deg r_1>\deg r_0+\deg r_2$. Then

(a) The polynomial $P_n$ is nonzero for each $n\geq 0$.

(b) Denote by $L_n$ the least common multiple of all (nonzero) polynomials $P_1, P_2,\ldots, P_{n-1}$. We have
$$
(r_0r_2)^{a+b}L_n(ux)^a(vy)^bx^cy^d\in I_n
$$
for all $a,b,c,d\geq 0$ and $2a+2b+c+d=2n$.

(c) $I_n:r_0^nr_2^nL_n\supseteq I_n+(u,v,x,y)^{2n}$.

\end{lemma}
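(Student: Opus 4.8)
The plan is to prove the three parts in sequence, using part (a) to make sense of $L_n$ in part (b), then deriving part (c) as a quick consequence of part (b).

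For part (a), I would argue that $P_n \neq 0$ for all $n$ by tracking degrees. Set $\delta_0 = \deg r_0$, $\delta_1 = \deg r_1$, $\delta_2 = \deg r_2$, and let $\gamma = 2\delta_1 - \delta_0 - \delta_2 > 0$ by hypothesis. The recursion $P_{n+1} = r_1 P_n - r_0 r_2 P_{n-1}$ has the feature that the two terms $r_1 P_n$ and $r_0 r_2 P_{n-1}$ have \emph{different} degrees provided $P_n$ and $P_{n-1}$ already have the ``expected'' degrees: one checks by induction that $\deg P_n = n\delta_1 + \binom{?}{?}\cdots$—more precisely, if $\deg P_{n} = \deg P_{n-1} + \delta_1$ (the generic behavior), then $\deg(r_1 P_n) = \deg P_n + \delta_1$ while $\deg(r_0 r_2 P_{n-1}) = \deg P_{n-1} + \delta_0 + \delta_2 = \deg P_n - \delta_1 + \delta_0 + \delta_2 = \deg P_n + \delta_1 - \gamma < \deg(r_1 P_n)$. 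Hence no cancellation of the top term can occur, so $P_{n+1} \neq 0$ and $\deg P_{n+1} = \deg P_n + \delta_1$, closing the induction (base cases $P_0 = 1$, $P_1 = r_1$, noting $\delta_1 > (\delta_0+\delta_2)/2 \geq 0$ so $r_1 \neq 0$).

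Part (b) is the heart of the argument and the step I expect to be the main obstacle. The idea is to reduce the ``mixed'' monomial $(ux)^a(vy)^b x^c y^d$ to the pure case handled by Lemma~\ref{colon}. Writing the relation $g := r_0 u^2x^2 + r_1 uxvy + r_2 v^2y^2 \in I_n$, observe that modulo $(u^n, v^n)$ the variables $u$ and $v$ behave like ``the identity'' in the sense that $g$ plays the role of $r_0 x^2 + r_1 xy + r_2 y^2$ after substituting $ux \rightsquigarrow x$, $vy \rightsquigarrow y$; Lemma~\ref{colon} gives $xy^{m-1}P_{m-1} \in (x^m, y^m, r_0x^2+r_1xy+r_2y^2)$, and I would lift this to a statement in $S$: $(ux)(vy)^{m-1}P_{m-1} \in (u^mx^m, v^my^m, g) \subseteq I_m$ whenever $m \le n$ (adjusting by the bound $m \le n$ so that $u^m x^m, v^m y^m \in I_n$). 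Then, given $(ux)^a(vy)^b x^c y^d$ with $2a+2b+c+d = 2n$, I would multiply by an appropriate power of $r_0 r_2$ and use the quadratic relation $g$ together with the above to successively trade blocks of $ux$ and $vy$ against powers of $P_{m-1}$; the least common multiple $L_n$ absorbs all the various $P_{m-1}$ that arise for $1 \le m-1 \le n-1$, and $(r_0r_2)^{a+b}$ is the total ``denominator'' accumulated from the $a+b$ applications of the relation. The combinatorial bookkeeping—exactly which $m$'s appear and why $c$ or $d$ being large is not an obstruction (one can first drive the purely-$x$ or purely-$y$ part into $(x^n)$ or $(y^n)$)—is the delicate part, and I would organize it as a downward induction on $a+b$, the base case $a = b = 0$ being immediate since then $c + d = 2n$ forces $c \ge n$ or $d \ge n$.

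Part (c) follows cleanly from (b). Take any monomial generator $w$ of $(u,v,x,y)^{2n}$; I need $r_0^n r_2^n L_n \cdot w \in I_n$. Modulo $I_n$ we may assume no exponent in $w$ reaches $n$, so $w = u^{a'}v^{b'}x^{c'}y^{d'}$ with all exponents $< n$ and $a'+b'+c'+d' = 2n$; rewriting $u^{a'}v^{b'}x^{c'}y^{d'}$ in terms of the products $ux$ and $vy$ (possible after suitable manipulation, or by a direct case analysis grouping $u$'s with $x$'s and $v$'s with $y$'s) puts us in the shape covered by part (b) with $2a+2b+c+d = 2n$, and since $a + b \le n$ we have $(r_0r_2)^{a+b} \mid (r_0 r_2)^n = r_0^n r_2^n$, so $r_0^n r_2^n L_n \cdot w \in I_n$. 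Since also $I_n \subseteq I_n : r_0^n r_2^n L_n$ trivially, we get the stated containment $I_n : r_0^n r_2^n L_n \supseteq I_n + (u,v,x,y)^{2n}$.
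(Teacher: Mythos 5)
Your part (a) is correct and matches the paper's (the degree estimate forces $\deg P_n = n\deg r_1$ so no cancellation of the leading term is possible). Your part (c), modulo part (b), is also essentially the paper's argument, though the phrase ``after suitable manipulation'' hides a genuine point: you must invoke the symmetries $u\leftrightarrow x$ and $v\leftrightarrow y$ (each preserving $I_n$ and the relation $g$) to reduce to the case where the exponent of $x$ is at least that of $u$ and the exponent of $y$ is at least that of $v$; only then does grouping $(ux)$-pairs and $(vy)$-pairs leave a remainder purely in $x,y$ as required by (b).

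The real gap is in (b), and I think you already sense it. Your proposed induction is on $a+b$, but the rewriting step via $g$, namely $r_2(vy)^2 \equiv -r_0(ux)^2 - r_1(ux)(vy)\pmod{g}$, sends $(ux)^a(vy)^b$ to $(ux)^{a+2}(vy)^{b-2}$ and $(ux)^{a+1}(vy)^{b-1}$: the sum $a+b$ is \emph{unchanged}, so induction on $a+b$ does not terminate. Moreover, since your base case is $a=b=0$ (the minimum of $a+b$), ``downward induction on $a+b$'' is not coherent as stated. The paper instead first reduces by the symmetry $u\leftrightarrow v$, $x\leftrightarrow y$, $r_0\leftrightarrow r_2$ (which fixes each $P_n$, since $M_n$ is sent to its transpose) to the case $c\geq d$, and then inducts on $b$, proving the sharper statement $r_2^b L_n(ux)^a(vy)^bx^cy^d\in I_n$. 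The base case $b=0$ gives $a+c\geq n$ directly; the base case $b=1$ splits into $c>d$ (again $a+c\geq n$) and $c=d$, and it is only this last subcase that invokes Lemma~\ref{colon}, with $m=n-c$ and $a=m-1$, yielding $L_n(ux)^{m-1}(vy)x^cy^c\in I_n$. Your sketch does not isolate these base cases, does not identify the correct induction variable, and explicitly defers ``the combinatorial bookkeeping,'' which is precisely the content of the proof. So while your high-level picture (lift Lemma~\ref{colon} by $x\mapsto ux$, $y\mapsto vy$, then grind down using $g$) matches the paper's, the argument as written does not close, and the missing ingredient — induct on $b$ after symmetrizing to $c\geq d$, with two explicit base cases — is not a detail one can wave away.
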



\begin{proof}

(a) It is easy to show, by induction, that the degree of the polynomial $P_n$ is $n\deg r_1$, provided
$2\deg r_1>\deg r_0+\deg r_2$. Hence $P_n$ is a nonzero polynomial for each $n\geq 0$.

(b) We shall prove that for $c\leq d$ we have
$$
r_0^aL_n(ux)^a(vy)^bx^cy^d\in I_n,
$$
and for $c\geq d$,
$$
r_2^bL_n(ux)^a(vy)^bx^cy^d\in I_n.
$$
It is clear that  we may assume that $c\geq d$ without loss of
generality. We shall proceed by induction on $b$. If $b=0$ then $a+c\geq n$,
hence $(ux)^ax^c\in I_n$. The next step in the induction procedure is when $b=1$. We consider two cases,
according as $c>d$ and $c=d$.

If $c>d$ then from $2a+c+d=2n-2$ we get $a+c> n-1$, hence $(ux)^ax^c\in I_n$.
Now suppose $c=d$. Put $m=n-c$. Hence $a+1=m$ and it suffices to show that
$$L_n(ux)^a(vy)\in \big((ux)^m, (vy)^m, r_0(ux)^2+r_1uxvy+r_2(vy)^2\big).$$
But this follows from the Lemma~\ref{colon} above and the definition of $L_n$.

Now let $b\geq 2$ and suppose the conclusion is true for integers smaller than $b$.
Modulo $r_0(ux)^2+r_1uxvy+r_2(vy)^2$, we have
\begin{eqnarray*}
r_2^bL_n(ux)^a(vy)^bx^cy^d&=&-r_2^{b-1}L_n(ux)^a(vy)^{b-2}\big(r_0(ux)^2+r_1uxvy\big)x^cy^d\\
&=& -r_0r_2^{b-1}L_n(ux)^{a+2}(vy)^{b-2}x^cy^d-\\
&&-r_1r_2^{b-1}L_n(ux)^{a+1}(vy)^{b-1}x^cy^d,
\end{eqnarray*}
and by the induction hypothesis this sum belongs to $I_n$.

(c) This follows easily from (b). Indeed, consider $a,b,c,d$ with
$a+b+c+d=2n$, we want to show that $r_0^nr_2^nL_nx^ay^bu^cv^d\in
I_n$. We may assume that $a\geq c $ and $b\geq d$. Then $c+d\leq n$
and
$$r_0^nr_2^nL_nx^ay^bu^cv^d=r_0^nr_2^nL_n(ux)^c(vy)^dx^{a-c}y^{b-d}\in
I_n$$
 by (b).
\end{proof}


\begin{proposition}\label{key} With the same notations as in Lemma ~\ref{inclusion},
for any $g(t)\in k[t]\setminus \{0\}$ and for any positive integer $n$ we have
$$
\big(I_n+(u,v,x,y)^{2n}\big):r_0^{2n}r_2^{2n}g(t)=
\big(I_n+(u,v,x,y)^{2n}\big):r_0^{2n}r_2^{2n}.
$$
\end{proposition}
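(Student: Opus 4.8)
The inclusion $\supseteq$ is immediate. For the reverse, we must show that if $f\in S$ satisfies $fr_0^{2n}r_2^{2n}g(t)\in A$, where $A:=I_n+(u,v,x,y)^{2n}$, then $fr_0^{2n}r_2^{2n}\in A$. If $r_0r_2=0$ both colon ideals equal $S$, so assume $r_0,r_2\neq0$; together with $2\deg r_1>\deg r_0+\deg r_2$ this gives $P_m\neq0$ for all $m$ by Lemma~\ref{inclusion}(a). Since $A$ is homogeneous for the grading $\deg t=0$, $\deg u=\deg v=\deg x=\deg y=1$, we may take $f$ homogeneous of degree $e$; if $e\geq2n$ then $f\in(u,v,x,y)^{2n}\subseteq A$, so assume $e<2n$. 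As $(u,v,x,y)^{2n}$ has no nonzero elements of degree $<2n$, the hypothesis forces $fr_0^{2n}r_2^{2n}g(t)\in I_n$, and it suffices to prove $fr_0^{2n}r_2^{2n}\in I_n$.

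The reformulation I would work with is this: the asserted equality of colon ideals (for every nonzero $g(t)$) says exactly that every nonzero $g(t)\in k[t]$ is a nonzerodivisor on $r_0^{2n}r_2^{2n}\cdot(S/A)$, i.e.\ that $r_0^{2n}r_2^{2n}$ annihilates the $k[t]$-torsion submodule of $S/A$. Now $S/A$ vanishes in degrees $\geq2n$ and coincides with $S/I_n$ in degrees $<2n$, so it is enough to show that $r_0^{2n}r_2^{2n}$ kills the $k[t]$-torsion of $(S/I_n)_e$ for each $e<2n$. I would then reduce modulo the monomial ideal $(u^n,v^n,x^n,y^n)$, which is harmless since the corresponding quotient of a free $k[t]$-module has no torsion, identifying $(S/I_n)_e$ with the cokernel of the $k[t]$-matrix $N_e$ recording multiplication by $q=r_0u^2x^2+r_1uxvy+r_2v^2y^2$ from the reduced monomials of degree $e-4$ to those of degree $e$. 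The three monomial shifts $u^2x^2$, $uxvy$, $v^2y^2$ all preserve the pair $(a-c,\;b-d)$ of exponent differences, so $N_e$ is block diagonal, and it is enough to annihilate the torsion of each block's cokernel by $r_0^{2n}r_2^{2n}$.

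Fix a block. Writing $X=ux$, $Y=vy$ as in Lemma~\ref{inclusion}(b) and peeling off the common overhang — which, using the symmetries $u\leftrightarrow x$ and $v\leftrightarrow y$, we may take to be $u^{\alpha}v^{\beta}$ with $\alpha,\beta\geq0$ — the block is spanned by the monomials $X^iY^ju^{\alpha}v^{\beta}$ with $0\leq i\leq n-1-\alpha$, $0\leq j\leq n-1-\beta$, and its matrix carries only $r_0,r_1,r_2$ on three consecutive diagonals, sending $X^iY^ju^{\alpha}v^{\beta}$ to $r_0X^{i+2}Y^j u^\alpha v^\beta+r_1X^{i+1}Y^{j+1}u^\alpha v^\beta+r_2X^iY^{j+2}u^\alpha v^\beta$ with each term dropped when it leaves the allowed exponent range. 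The $k[t]$-torsion of the cokernel of such a band matrix is annihilated by its largest invariant factor, and that invariant factor divides every nonzero minor of size equal to the rank. The combinatorial core is the observation that for $e<2n$ the level $\rho=(e-4-\alpha-\beta)/2$ of the source monomials satisfies $\rho\leq n-3-\min(\alpha,\beta)$ — which follows from $e\leq 2n-1$ together with the parity constraint $e\equiv\alpha+\beta\pmod2$ (we may assume the source is nonempty, else the cokernel is free) — and this bound guarantees that all $r_2$-entries of the block survive and that the source monomials index a subset of the target monomials, so that the square submatrix ``column $i\mapsto$ row $i$'' is lower triangular with $r_2$ on the diagonal; if instead $\min(\alpha,\beta)=\alpha$ the symmetric statement holds with $r_0$. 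Hence some maximal minor of the block equals a pure power $r_0^{s}$ or $r_2^{s}$ with $s\leq n$, the torsion of its cokernel is annihilated by $r_0^{n}$ or $r_2^{n}$, and a fortiori by $r_0^{2n}r_2^{2n}$, which is the proposition.

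The main obstacle is this last combinatorial step, especially the bookkeeping for blocks meeting the wall $\{\text{exponent}=n\}$, where the three diagonals are truncated. The unique configuration admitting no pure-power maximal minor is the full square tridiagonal matrix $M_q$, whose only maximal minor is $\det M_q=P_q$ — a polynomial generically coprime to $r_0r_2$, to which one therefore cannot appeal — and one has to check that this configuration forces $e=2n$, hence lies outside the range considered. This is also the structural reason the exponent in the statement is $r_0^{2n}r_2^{2n}$ rather than the $r_0^nr_2^nL_n$ of Lemma~\ref{inclusion}(c): intersecting $I_n$ with $(u,v,x,y)^{2n}$ discards precisely the degrees $\geq2n$ in which the ``$P_j$-part'' of the $k[t]$-torsion of $S/I_n$ lives.
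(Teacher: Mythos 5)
Your proof is correct, and it takes a genuinely different route from the paper's. The paper argues directly on the presentation coefficients: it writes $f\,g(t)\,r_0^{2n}r_2^{2n}$ in terms of the generators of $I_n$, extracts for each multi-index the congruence
$r_0E^{i-2,r-2}_{js}+r_1E^{i-1,r-1}_{j-1,s-1}+r_2E^{ir}_{j-2,s-2}\equiv 0 \pmod{r_0^{2n}r_2^{2n}g(t)}$,
and then iteratively multiplies by $r_0$ (resp.\ $r_2$) to push the indices out of range, showing after at most $O(n)$ steps that $r_0^a r_2^b E^{ir}_{js}\equiv 0\pmod{r_0^{2n}r_2^{2n}g(t)}$ with $a,b<2n$, whence $g(t)\mid E^{ir}_{js}$; the case analysis over the sets $A_0,A_1,A_2$ and their differences is exactly the bookkeeping required to keep the total power of $r_0$ and $r_2$ below $2n$. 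Your proof instead packages the same data as a map of free $k[t]$-modules, decomposes the matrix $N_e$ into $(a-c,b-d)$-blocks (a block structure the paper never makes explicit), and invokes Smith normal form: the largest invariant factor of a block divides any rank-size minor, and the parity bound $\rho\le n-3-\min(\alpha,\beta)$ guarantees the shifted copy of the source sits inside the target, producing a triangular maximal minor $r_0^s$ or $r_2^s$ with $s\le n$. Your approach buys a structural reason for the exponent $2n$ (it is the degree past which the tridiagonal block becomes square and the pure-power minor disappears, which is precisely what $(u,v,x,y)^{2n}$ truncates), and it in fact yields the sharper annihilator $\operatorname{lcm}(r_0^n,r_2^n)$; it relies on more machinery, where the paper's argument is elementary but heavier on index-chasing. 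I verified the key inequality: for $e<2n$ with $e\equiv\alpha+\beta\pmod 2$, one gets $\rho\le n-3-(\alpha+\beta)/2$ when the parities match and $\rho\le n-3-(\alpha+\beta-1)/2$ when they differ, and both are $\le n-3-\min(\alpha,\beta)$, so the injection $(i,j)\mapsto(i,j+2)$ (or $(i+2,j)$) indeed lands in the target range, giving a lower-triangular $s\times s$ submatrix of full column rank with the asserted diagonal.
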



\begin{proof} We may assume that $r_0r_2$ is nonzero. Let $f(t,u,v,x,y)$ be a homogeneous polynomial
in $u,v,x,y$ of
degree $d<2n$. We need to show that if
$$
f(t,u,v,x,y)g(t)r_0^{2n}r_2^{2n}\in I_n
$$
for some $g(t)\in k[t]\setminus\{0\}$, then
$$
f(t,u,v,x,y)r_0^{2n}r_2^{2n}\in I_n.
$$
We may assume that none of the monomial terms of $f(t,u,v,x,y)$ is divisible by any of $u^n,v^n, x^n,y^n$.
We have a presentation

$f(t,u,v,x,y)g(t)r_0^{2n}r_2^{2n}$

$\quad =Au^n+Bv^n+Cx^n+Dy^n+
\Big(\sum_{i,j,r,s}E^{ir}_{js}u^iv^jx^ry^s\Big)(r_0u^2x^2+r_1uxvy+r_2v^2y^2),
$

\noindent where $A,B,C,D$ are polynomials in $t,u,v,x,y$ which are homogeneous in
$u,v,x,y$, $E_{js}^{ir}$ are polynomials in $k[t]$ and the sum is taken over
all $i,j,r,s\geq 0$ with $i+j+r+s=d-4$. In the sequel when the conditions on the indices are omitted we always
 mean that the sums are taken so that the terms
are of correct degrees. We may write the right-hand side as
\begin{eqnarray*}
&&Au^n+Bv^n+Cx^n+Dy^n+\\
&&+\sum r_0E^{ir}_{js}u^{i+2}v^jx^{r+2}y^s+
\sum r_1E^{ir}_{js}u^{i+1}v^{j+1}x^{r+1}y^{s+1}+\sum r_2E^{ir}_{js}u^iv^{j+2}x^ry^{s+2}\\
&=&Au^n+Bv^n+Cx^n+Dy^n+\\
&&+\sum r_0E^{i-2,r-2}_{js}u^iv^jx^ry^s+
\sum r_1E^{i-1,r-1}_{j-1,s-1}u^iv^jx^ry^s+\sum r_2E^{ir}_{j-2,s-2}u^iv^jx^ry^s.
\end{eqnarray*}
Since we assumed that none of the monomial terms of $f(t,u,v,x,y)$
is divisible by any of $u^n,v^n,x^n,y^n$, the terms that are divisible by any of
$u^n,v^n,x^n,y^n$ in the above sum cancel. We finally get
\begin{equation}
f(t,u,v,x,y)g(t)r_0^{2n}r_2^{2n}=\sum_{\substack{i+j+r+s=d\\ 0\leq i,j,r,s\leq n-1}}\Big(r_0E^{i-2,r-2}_{js}+
r_1E^{i-1,r-1}_{j-1,s-1}+r_2E^{ir}_{j-2,s-2}\Big)u^iv^jx^ry^s,
\end{equation}
in which, by convention, $E^{ir}_{js}=0$ if $\min\{i,j,r,s\}<0$. It
follows from $(1)$ that
\begin{equation}
r_0E^{i-2,r-2}_{js}+r_1E^{i-1,r-1}_{j-1,s-1}+r_2E^{ir}_{j-2,s-2}\equiv 0 \quad \mod
r_0^{2n}r_2^{2n}g(t)
\end{equation}
for all $i+j+r+s=d,\quad 0\leq i,j,r,s\leq n-1$. We claim that in fact $g(t)$
divides each $E^{ir}_{js}$ that appears in the above sums.

We first prove the Proposition assuming the claim. If $g(t)$ divides each $E^{ir}_{js}$, then there are
$\bar{E}^{ir}_{js}$ in $k[t]$ such that $E^{ir}_{js}=g(t)\bar{E}^{ir}_{js}$ for all $i,j,r,s$.
It then follows from (1) that
$$
f(t,u,v,x,y)r_0^{2n}r_2^{2n}=\sum_{\substack{i+j+r+s=d\\ 0\leq i,j,r,s\leq n-1}}\Big(r_0\bar{E}^{i-2,r-2}_{js}+
r_1\bar{E}^{i-1,r-1}_{j-1,s-1}+r_2\bar{E}^{ir}_{j-2,s-2}\Big)u^iv^jx^ry^s.
$$
Now by tracing back the calculations made in the previous paragraphs and using the same regrouping
 arguments as in the end of the proof of Proposition ~\ref{existence} we get
$$
f(t,u,v,x,y)r_0^{2n}r_2^{2n}=\bar{A}u^n+\bar{B}v^n+\bar{C}x^n+\bar{D}y^n+
\Big(\sum\bar{E}^{ir}_{js}u^iv^jx^ry^s\Big)
(r_0u^2x^2+r_1uxvy+r_2v^2y^2),
$$
which of course gives us that
$$
f(t,u,v,x,y)r_0^{2n}r_2^{2n}\in I_n.
$$

We now prove the claim. Define the following sets
\begin{eqnarray*}
A_0&=&\big\{E^{ir}_{js}|\quad i+j+r+s=d-4, \quad  -2\leq i,r < n-2, \quad
0\leq j,s <n\big\},\\
A_1&=&\big\{E^{ir}_{js}|\quad i+j+r+s=d-4, \quad -1\leq i,j,r,s< n-1\big\},\\
A_2&=&\big\{E^{ir}_{js}|\quad i+j+r+s=d-4, \quad 0\leq i,r < n, \quad
-2\leq j,s<n-2\big\}.
\end{eqnarray*}
Note that $A_0,A_1,A_2$ are just the sets of all $E_{js}^{ir}$ that appear as coefficients of $r_0, r_1,r_2$ in (1),
correspondingly. We have
\begin{eqnarray*}
A_1\setminus A_0&=&\big\{E^{ir}_{js}|\quad i+j+r+s=d-4, \quad
n-2\in\{i,r\}\text{ or } -1\in \{j,s\}\big\},\\
A_1\setminus A_2&=&\big\{E^{ir}_{js}|\quad i+j+r+s=d-4, \quad
-1\in\{i,r\}\text{ or } n-2\in \{j,s\}\big\}.
\end{eqnarray*}
Since $i+j+r+s=d-4\leq 2n-5$, $A_1\setminus A_0\cup A_2$ is either
empty or consists of only $E^{ir}_{js}$ with $\min\{i,j,r,s\}<0$ which
are 0. Hence we can conclude that for each nonzero $E^{ir}_{js}$ that appears
in (1), either $E^{ir}_{js}\in A_0$ or $E^{ir}_{js}\in A_2$.

We shall need the following subsets
\begin{eqnarray*}
A_0\setminus A_2&=&\big\{E^{ir}_{js}|\quad i+j+r+s=d-4, \quad -2\leq
i,r<n-2,\quad 0\leq j,s<n,\\
&&\quad
\{-2,-1\}\cap\{i,r\}\neq \emptyset \text{ or } \{n-2,n-1\}\cap \{j,s\}\neq \emptyset\big\},\\
A_2\setminus A_0&=&\big\{E^{ir}_{js}|\quad i+j+r+s=d-4, \quad 0\leq
i,r<n,\quad -2\leq j,s<n-2,\\
&&\quad \{n-2,n-1\}\cap\{i,r\}\neq \emptyset \text{ or }
\{-2,-1\}\cap \{j,s\}\neq \emptyset\big\}.
\end{eqnarray*}

Fix a nonzero $E_{js}^{ir}$, we would like to show that it is divisible by $g(t)$ as claimed. We will consider
three cases, according as $E_{js}^{ir}\in A_0\setminus A_2$, $E^{ir}_{js}\in A_2\setminus A_0$ and $E_{js}^{ir}\in A_0\cap A_2$.

First suppose $E_{js}^{ir}\in A_0\setminus A_2$. Then $\{n-2,n-1\}\cap\{j,s\}\neq\emptyset$. We claim that, for each
integer $l$ such that $E_{j-l,s-l}^{i+l,r+l}$ exists and has nonnegative indices, we have $E_{j-l,s-l}^{i+l,r+l}\in A_0$.
If not, suppose $E^{i+l,r+l}_{j-l,s-l}\in A_2\setminus A_0$ and has nonnegative indices. Then, by the description above,
we have $\{n-2,n-1\}\cap\{i+l,r+l\}\neq\emptyset$. Thus
$$
i+j+r+s=(i+r)+(j+s)\geq (n-2-l)+(n-2+l)=2n-4>d-4,
$$
a contradiction. The relation in (2) can be rewritten as
$$
r_0E^{ir}_{js}\equiv -r_1E^{i+1,r+1}_{j-1,s-1}-r_2E^{i+2,r+2}_{j-2,s-2} \mod  r_0^{2n}r_2^{2n}g(t).
$$
By what we just discussed, the elements $E^{i+1,r+1}_{j-1,s-1}$ and $E^{i+2,r+2}_{j-2,s-2}$ either have some negative indices
or belong to $A_0$. If any of them belong to $A_0$ then by multiplying both sides by $r_0$ we can, using (2), replace
it by combinations of some $E$ with all the subscripts decreased by at least 1. Keep doing this process until we end up
with elements with some negative indices, which are 0. Since each time the subscripts decreased by at least 1, we need
at most $\min\{j,s\}$ steps, hence the power of $r_0$ on the left-hand side will be at most $\min\{j,s\}$+1. We obtain that
$$
r_0^{\min\{j,s\}+1}E_{js}^{ir}\equiv 0 \text{ mod } r_0^{2n}r_2^{2n}g(t).
$$
It follows that $E^{ir}_{js}\equiv 0 \mod g(t)$ since $\min\{j,s\}+1 < 2n$.

By a symmetric argument, if $E^{ir}_{js}\in A_2\setminus A_0$ then
$$
r_2^{\min\{i,r\}+1}E^{ir}_{js}\equiv 0 \mod r_0^{2n}r_2^{2n}g(t),
$$
which results that $E^{ir}_{js}\equiv 0 \mod g(t)$.

Now we consider the last case, namely when $E^{ir}_{js}\in A_0\cap A_2$. We use again
the relation
$$
r_0E^{ir}_{js}\equiv r_1E^{i+1,r+1}_{j-1,s-1}-r_2E^{i+2,r+2}_{j-2,s-2} \mod r_0^{2n}r_2^{2n}g(t).
$$
Each nonzero $E$ on the right-hand side is either in $A_0\cap A_2$ or in $A_0\setminus A_2$
or in $A_2\setminus A_0$. If any of them is in $A_0\cap A_2$, then by multiplying both sides by
$r_0$ we can express it as a combination of other $E$ with subscripts decreased by at least 1.
Iterating this process, we can conclude that there is a relation
$$
r_0^{l}E^{ir}_{js}\equiv \sum_{\tau=1}^{h}c_\tau E^{i+\tau,r+\tau}_{j-\tau,s-\tau}\mod r_0^{2n}r_2^{2n}g(t),
$$
in which $l, h\leq \min\{j,s\}+1$ and each $E^{i+\tau,r+\tau}_{j-\tau,s-\tau}$ is either in
$A_0\setminus A_2$ or in $A_2\setminus A_0$. It follows from the discussion above that for each $\tau$,
$$
r_0^{1+\min\{j-\tau,s-\tau\}}r_2^{1+\min\{i+\tau,r+\tau\}}E^{i+\tau,r+\tau}_{j-\tau,s-\tau}
\equiv 0\mod r_0^{2n}r_2^{2n}g(t).
$$
Hence
$$
r_0^{1+l+\min\{j-1,s-1\}}r_2^{1+\min\{i+h,r+h\}}E^{ir}_{js}\equiv 0\mod r_0^{2n}r_2^{2n}g(t),
$$
and this gives us that $E^{ir}_{js}\equiv 0\mod g(t)$ since
\begin{eqnarray*}
1+\min\{i+h,r+h\}&\leq& 2n,\\
1+l+\min\{j-1,s-1\}&<&2n.
\end{eqnarray*}
This finishes the proof of the claim, hence of the proposition.
\end{proof}


\begin{lemma}\label{primary}
Use the same notation and hypothesis as in Lemma ~\ref{inclusion}(c). We then have
$$
\big(I_n:r_0^{3n}r_2^{3n}L_n\big)=\big(I_n+(u,v,x,y)^{2n}\big):r_0^{2n}r_2^{2n}
$$
 and this ideal is primary to the ideal $(u,v,x,y)$ if $r_0r_2$ is nonzero.
\end{lemma}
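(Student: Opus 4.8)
The plan is to introduce the two ideals $J=\big(I_n+(u,v,x,y)^{2n}\big):r_0^{2n}r_2^{2n}$ and $J'=I_n:r_0^{3n}r_2^{3n}L_n$, to prove that $J$ is $(u,v,x,y)$-primary, and then to sandwich $J'$ between $J$ and the contraction of its own localization at $(u,v,x,y)$ so that $J=J'$. If $r_0r_2=0$ then $r_0^{3n}r_2^{3n}L_n=0$ and both ideals are the unit ideal, so there is nothing to prove; hence from now on assume $r_0r_2\neq 0$, so that $L_n\neq 0$ by Lemma~\ref{inclusion}(a). The inclusion $J\subseteq J'$ is immediate: multiplying the inclusion $r_0^{2n}r_2^{2n}J\subseteq I_n+(u,v,x,y)^{2n}$ by $r_0^nr_2^nL_n$ and applying Lemma~\ref{inclusion}(c), which gives $r_0^nr_2^nL_n\big(I_n+(u,v,x,y)^{2n}\big)\subseteq I_n$, yields $r_0^{3n}r_2^{3n}L_nJ\subseteq I_n$, that is, $J\subseteq J'$.

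Next I would check that $J$ is $(u,v,x,y)$-primary. For the radical: $(u,v,x,y)^{2n}\subseteq J$ shows $(u,v,x,y)\subseteq\sqrt J$; conversely, if $f\in J$ then $fr_0^{2n}r_2^{2n}\in I_n+(u,v,x,y)^{2n}\subseteq(u,v,x,y)$, and reducing modulo $(u,v,x,y)$ in $S/(u,v,x,y)=k[t]$ gives $\bar fr_0^{2n}r_2^{2n}=0$ with $r_0^{2n}r_2^{2n}\neq 0$, so $\bar f=0$ and $f\in(u,v,x,y)$; hence $\sqrt J=(u,v,x,y)$, which is prime. For primariness it is enough to exclude embedded primes. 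Any $P\in\ass S/J$ contains $\sqrt J=(u,v,x,y)$, and since $S/(u,v,x,y)=k[t]$ is a PID, $P$ equals $(u,v,x,y)$ or $(u,v,x,y,\pi(t))$ for some irreducible $\pi(t)\in k[t]$. In the latter case $P=J:m$ for some $m$, so $\pi(t)m\in J$ and $m\notin J$; but $J:\pi(t)=\big(I_n+(u,v,x,y)^{2n}\big):r_0^{2n}r_2^{2n}\pi(t)=\big(I_n+(u,v,x,y)^{2n}\big):r_0^{2n}r_2^{2n}=J$ by Proposition~\ref{key} applied with $g(t)=\pi(t)$, a contradiction. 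Thus $(u,v,x,y)$ is the unique associated prime of $S/J$, so $J$ is $(u,v,x,y)$-primary.

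It remains to prove $J'\subseteq J$. Let $f\in J'$, so $fr_0^{3n}r_2^{3n}L_n\in I_n$. Since $r_0^{3n}r_2^{3n}L_n$ is a nonzero element of $k[t]$ it does not lie in $(u,v,x,y)$, hence is a unit in $S_{(u,v,x,y)}$, so $f\in I_nS_{(u,v,x,y)}$. As $I_n\subseteq J$, this gives $f\in JS_{(u,v,x,y)}\cap S$, which equals $J$ because $J$ is $(u,v,x,y)$-primary. Therefore $J\subseteq J'\subseteq J$, proving the displayed equality; combined with the previous paragraph this finishes the lemma. The only substantive point is the primariness of $J$, and it is driven entirely by Proposition~\ref{key}: the potential obstacle --- that some irreducible factor of $t$ along which $I_n$ has an embedded component might survive as an embedded prime of the colon ideal --- is precisely what that proposition rules out, and the rest is routine manipulation of colons, radicals, and localization at $(u,v,x,y)$.
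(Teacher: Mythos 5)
Your proof is correct. The two easy pieces — the inclusion $\big(I_n+(u,v,x,y)^{2n}\big):r_0^{2n}r_2^{2n}\subseteq I_n:r_0^{3n}r_2^{3n}L_n$ via Lemma~\ref{inclusion}(c), and the fact that $J=\big(I_n+(u,v,x,y)^{2n}\big):r_0^{2n}r_2^{2n}$ is $(u,v,x,y)$-primary because $J:g(t)=J$ for every nonzero $g(t)\in k[t]$ — match what the paper does (the paper merely calls the second point ``an immediate consequence'' of Proposition~\ref{key}; you have fleshed out the reasoning with the associated-primes argument, which is the right way to make it precise). Where you genuinely diverge is the reverse inclusion $I_n:r_0^{3n}r_2^{3n}L_n\subseteq J$. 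The paper handles it in one line by nesting colons:
$$I_n:L_nr_0^{3n}r_2^{3n}\subseteq \Big(\big(I_n+(u,v,x,y)^{2n}\big):r_0^{2n}r_2^{2n}\Big):r_0^nr_2^nL_n = J,$$
with a single application of Proposition~\ref{key} to $g(t)=r_0^nr_2^nL_n$; no primariness is needed to get equality, and that direction never leaves the ring. Your route instead first establishes that $J$ is $(u,v,x,y)$-primary and then localizes at $(u,v,x,y)$ to pass from $f\in I_nS_{(u,v,x,y)}$ to $f\in JS_{(u,v,x,y)}\cap S=J$. Both arguments rest on the same engine (Proposition~\ref{key}); yours is slightly more geometric and makes transparent why the $r_0,r_2,L_n$ factors are harmless (they are units at the relevant prime), while the paper's colon manipulation is shorter and avoids invoking the contraction property of primary ideals. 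Either presentation would be acceptable; yours also explicitly handles the degenerate case $r_0r_2=0$, which the paper leaves implicit.
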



\begin{proof} The inclusion
$$
\big(I_n:r_0^{3n}r_2^{3n}L_n\big)\supseteq \big(I_n+(u,v,x,y)^{2n}\big):r_0^{2n}r_2^{2n}
$$
follows from Lemma~\ref{inclusion}. For the converse,
\begin{eqnarray*}
I_n:L_nr_0^{3n}r_2^{3n}&\subseteq&  \Big(\big(I_n+(u,v,x,y)^{2n}\big):r_0^{2n}r_2^{2n}\Big):r_0^{n}r_2^nL_n\\
&=& \big(I_n+(u,v,x,y)^{2n}\big):r_0^{2n}r_2^{2n}\qquad\text{ by
Proposition~\ref{key}}.
\end{eqnarray*}
The conclusion that $\big(I_n+(u,v,x,y)^{2n}\big):r_0^{2n}r_2^{2n}$
is primary to the ideal $(u,v,x,y)$ is another immediate consequence of
Proposition ~\ref{key}.
\end{proof}


We now prove the main theorem of this section.

\begin{theorem}\label{main}
Let $k$ be a field of prime characteristic $p$. Consider the hypersurface
$$
R=\frac{k[t,u,v,x,y]}{(r_0u^2x^2+r_1uxvy+r_2v^2y^2)},
$$
where $r_0,r_1,r_2$ are polynomials in $k[t]$. Let $I=(u,v,x,y)$. Suppose that $r_0r_2$ is nonzero
and that $2\deg r_1>\deg r_0+\deg r_2$. Then there is a constant
$c$ such that for each $q=p^e$, there is a primary decomposition
$$
I^{[q]}=Q_1\cap Q_2\cap\cdots\cap Q_s
$$
satisfying $(\sqrt{Q_i})^{c[q]}\subseteq Q_i$ for all $i=1,\ldots,s$.
\end{theorem}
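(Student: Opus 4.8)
The plan is to reduce, just as in Section~3, to producing for each $q=p^e$ a single polynomial $h_q(t)\in k[t]$ with two properties: $I^{[q]}:h_q(t)$ is primary to $I=(u,v,x,y)$, and in every factorization $h_q=\tau_1^{s_1}\cdots\tau_l^{s_l}$ into powers of distinct irreducibles the exponents $s_j$ are bounded by $c_0q$ for one constant $c_0$ independent of $q$. Granting this, apply Lemma~\ref{stable} with the four variables $u,v,x,y$ and the one homogeneous relation $r_0u^2x^2+r_1uxvy+r_2v^2y^2$, which has positive degree $4$ for the grading in the statement. It produces a primary decomposition $I^{[q]}=Q\cap Q_1\cap\cdots\cap Q_l$ in which $Q=I^{[q]}:h_q(t)$ is primary to $I$, each $Q_i=I^{[q]}+R\tau_i^{s_i}$ is primary to $(u,v,x,y,\tau_i)$, and $(\sqrt{Q_i})^{c_2 q+s_i}\subseteq Q_i$ for a constant $c_2$; since $s_i\leq c_0q$ this gives $(\sqrt{Q_i})^{(c_2+c_0)q}\subseteq Q_i$ for $i\geq1$. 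The remaining component $Q$ is the isolated component of $I^{[q]}$ at its unique minimal prime, since $\sqrt{I^{[q]}}=(u,v,x,y)R$ and the latter is prime because $R/(u,v,x,y)R=k[t]$; so Proposition~\ref{embedded} gives a universal $c_1$ with $(\sqrt{Q})^{c_1[q]}\subseteq Q$. Setting $c=\max\{c_1,c_2+c_0\}$ and using $(\sqrt{Q_i})^{c[q]}=((\sqrt{Q_i})^c)^{[q]}\subseteq(\sqrt{Q_i})^{cq}$ for the embedded components then yields the theorem.

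Everything therefore rests on the choice of $h_q$, and I would take $h_q(t)=r_0^{3q}\,r_2^{3q}\,L_q$, where $L_q$ is the least common multiple of the polynomials $P_1,\dots,P_{q-1}$ built from $r_0,r_1,r_2$ as in Lemma~\ref{inclusion}. Writing $S=k[t,u,v,x,y]$, the ideal $I^{[q]}$ of $R$ is the image of $I_q=(u^q,v^q,x^q,y^q,\,r_0u^2x^2+r_1uxvy+r_2v^2y^2)$, which is exactly the ideal $I_n$ of Lemma~\ref{inclusion} taken at $n=q$; since the defining relation of $R$ lies in $I_q$, the ideal $I^{[q]}:_R h_q(t)$ is the image of $I_q:_S h_q(t)$ under $S\to R$. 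By Lemma~\ref{primary} at $n=q$ --- legitimate since $r_0r_2\neq0$ --- the ideal $I_q:_S r_0^{3q}r_2^{3q}L_q$ coincides with $(I_q+(u,v,x,y)^{2q}):_S r_0^{2q}r_2^{2q}$ and is primary to $(u,v,x,y)$ in $S$, hence its image $I^{[q]}:_R h_q(t)$ is primary to $I$ in $R$.

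It remains to bound the exponents in the irreducible factorization of $h_q$ linearly in $q$. The factors coming from $r_0^{3q}$ and $r_2^{3q}$ occur with exponent at most $3q\deg r_0$ and $3q\deg r_2$ respectively. For $L_q$, Lemma~\ref{inclusion}(a) --- whose proof is precisely where the hypothesis $2\deg r_1>\deg r_0+\deg r_2$ enters --- shows $\deg P_i=i\deg r_1$, so every $P_i$ with $i\leq q-1$ has degree at most $(q-1)\deg r_1$ and consequently every irreducible factor of $L_q$ occurs in $L_q$ with exponent at most $(q-1)\deg r_1$. Altogether every irreducible factor of $h_q$ occurs with exponent at most $c_0 q$, where $c_0=3\deg r_0+3\deg r_2+\deg r_1$ is independent of $q$. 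Substituting this, and the primality statement of the previous paragraph, into the reduction of the first paragraph completes the proof.

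It is worth noting where the actual work sits. The substantive content is already contained in Lemmas~\ref{colon},~\ref{inclusion} and~\ref{primary} and in Proposition~\ref{key}; the present argument is essentially assembly. The one favorable feature of this hypersurface --- and the reason one obtains linear, not merely polynomial, growth here --- is that the relevant colon polynomial is built from the tridiagonal determinants $P_i$, whose degrees grow only linearly in $i$, in contrast to the minors of matrices of size $\sim q^{n-1}$ that underlie the general $h_q$ of Proposition~\ref{existence}. The only steps demanding care are therefore bookkeeping ones: tracking the exact exponents $3q$ on $r_0$ and $r_2$ emitted by Lemma~\ref{primary}, checking that the colon ideal descends faithfully from $S$ to $R$, and recording that $(u,v,x,y)$ is the unique minimal prime of $I^{[q]}$ so that Proposition~\ref{embedded} governs the isolated component.
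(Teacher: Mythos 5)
Your proposal is correct and follows essentially the same route as the paper: choose $h_q=r_0^{3q}r_2^{3q}L_q$, invoke Lemma~\ref{primary} to see $I^{[q]}:h_q$ is $(u,v,x,y)$-primary, use the linear bound $\deg P_n=n\deg r_1$ (from Lemma~\ref{inclusion}(a)) to get linearly bounded exponents in the factorization of $h_q$, then assemble via Lemma~\ref{stable} and Proposition~\ref{embedded}. You supply more explicit bookkeeping (the descent of colons from $S$ to $R$, the explicit constant $c_0=3\deg r_0+3\deg r_2+\deg r_1$) than the paper's terse final paragraph, but the argument is the same.
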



\begin{proof}  It follows from Lemma ~\ref{primary} that the ideal
$I^{[q]}:r_0^{3q}r_2^{3q}L_q$ is primary to the ideal $(u,v,x,y)$. Recall that the polynomial $L_q$ is defined to be the least common multiple of the polynomials $P_1, P_2,\ldots, P_{q-1}$. It is readily seen that the degrees of the polynomials $P_n$ are bounded above by a linear function in $n$.  Thus there is a constant $c$ such that for each
$q=p^e$ the polynomial $h_q(t)=r_0^{3q}r_2^{3q}L_q$ has an irreducible decomposition
$$
h_q(t)= \tau_1^{s_1}\cdots \tau_l^{s_l}
$$
in which $s_i\leq cq$ for all $i=1,\ldots,l$. Therefore the Frobenius powers of the ideal
$I=(u,v,x,y)$ have linear growth of primary decompositions by Lemma
~\ref{stable} and Proposition ~\ref{embedded}.
\end{proof}


\section{Another example of Singh and Swanson}


Consider the hypersurface
$$
\ds R=\frac{k[t,u,v,w,x,y,z]}{(u^2x^2+v^2y^2+tuxvy+tw^2z^2)}
$$ with $k$ an arbitrary field. This is a UFD and is F-regular if char $k=p>0$. It was proved in \cite[Theorem 6.6]{swanson-singh}
 that if char $k = p>0$ then
the set
$$
\bigcup_{q=p^e} \ass\frac{R}{(x^q,y^q,z^q)}
$$ has infinitely many maximal elements. Again, we can modify their proof to obtain the following


\begin{theorem}
Consider the hypersurface
$$
R=\frac{k[t,u,v,w,x,y,z]}{(u^2x^2+v^2y^2+tuxvy+tw^2z^2)},
$$
where $k$ is a field of prime characteristic $p>0$. Then the set
$$
\bigcup_{q=p^e}Ass\frac{R}{(u^q,v^q,w^q,x^q,y^q,z^q)}
$$
 has infinitely many (maximal) elements.
\end{theorem}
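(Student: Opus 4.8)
The plan is to mimic the argument used in Theorem~\ref{infinite}, adapting Proposition~\ref{modify} and the auxiliary results of Singh and Swanson to the three-variable setting. First I would recognize the defining equation $u^2x^2+v^2y^2+tuxvy+tw^2z^2$ as coming from an extension ring of the form treated in Proposition~\ref{modify}: set $A_0=k[t]$ and $A=k[t][a,b,c]/(a^2+b^2+tab+tc^2)$, so that $R\cong A[u,v,w,x,y,z]/(ux-a,\,vy-b,\,wz-c)$. With this presentation, Proposition~\ref{modify} (with $n=3$, and $t_1=a$, $t_2=b$, $t_3=c$) lets us compute colon ideals of the form $(u^q,\ldots,z^q)R:_{A_0}(\text{monomial})$ in terms of a colon ideal $(a^{m_1},b^{m_2},c^{m_3})A:_{A_0}f$ inside $A$, provided the exponent bookkeeping $k_i+m_i>\deg f$ holds.

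Next I would invoke the relevant computation from \cite[\S6]{swanson-singh}: their Theorem 6.6 already identifies, for suitable exponents, a colon ideal in $A$ (or in $k[t,x,y,z]$ after killing $u,v,w$) generated by a polynomial $g_q(t)\in k[t]$ drawn from a recursively defined family whose members have infinitely many irreducible factors as $q=p^e$ varies. Concretely, for each $q=p^e$ I would form the ideal
$$
J_q=(u^q,v^q,w^q,x^q,y^q,z^q)R:_R(\text{appropriate monomial in }ux,vy,wz),
$$
apply Proposition~\ref{modify} to get $J_q\cap k[t]=(g_q(t))$ for the Singh--Swanson polynomial $g_q$, and then sandwich
$$
(u^q,v^q,w^q,x^q,y^q,z^q,g_q(t))\subseteq J_q\subseteq (u,v,w,x,y,z,g_q(t)),
$$
so that $\Min(J_q)=\Min(u,v,w,x,y,z,g_q(t))$. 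Since each minimal prime of $J_q$ is associated to $R/(u^q,\ldots,z^q)$, and the union over $e$ of the irreducible factors of $\{g_{p^e}(t)\}$ is infinite by the Singh--Swanson input, the set $\bigcup_{q}\ass R/(u^q,v^q,w^q,x^q,y^q,z^q)$ must be infinite, and in fact have infinitely many maximal elements because distinct irreducible factors of $t$ give incomparable primes.

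The main obstacle I expect is the exponent bookkeeping in Proposition~\ref{modify}: one must choose the monomial multiplier and the exponents $m_i$, $k_i$ so that simultaneously $k_i+m_i>r=\deg f$ for every $i$, the resulting colon ideal in $A$ is exactly the one Singh and Swanson computed, and the degree $d$ of the multiplier stays within the range that makes the grading argument (the vanishing of the $b_i$-terms) go through. The presence of the mixed term $tw^2z^2$ together with $u^2x^2+v^2y^2$ means the relevant $A$-module presentation is not quite the tridiagonal one of Lemma~\ref{ss}, so I would need to verify that the determinantal/recursive structure Singh and Swanson exploited in their \S6 still applies verbatim, or reprove the small piece of it that is needed; the rest is a routine transcription of the proof of Theorem~\ref{infinite}.
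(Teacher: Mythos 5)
Your proposal takes essentially the same approach as the paper: set $A=k[t][a,b,c]/(a^2+b^2+tab+tc^2)$, present $R=A[u,v,w,x,y,z]/(ux-a,\,vy-b,\,wz-c)$, and apply Proposition~\ref{modify} with $f=ab^{q-2}$ to $J_q=(u^q,v^q,w^q,x^q,y^q,z^q)R:_R(ux)(vy)^{q-2}uvw^{q-1}$, obtaining $J_q\cap k[t]=(a^{q-1},b^{q-1},c)A:_{k[t]}ab^{q-2}=(P_{q-2})$. Your concern that the $tw^2z^2$ term might disturb the tridiagonal structure is unfounded: including $w^{q-1}$ in the multiplier forces $c$ into the colon ideal on the $A$ side, and $A/(c)\cong k[t][a,b]/(a^2+tab+b^2)$, so Lemma~\ref{ss}(a) applies verbatim to produce the same $P_{q-2}$ as in Theorem~\ref{infinite}.
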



\begin{proof} Consider the ideal
$$
J_q=(u^q,v^q,w^q,x^q,y^q,z^q)R:_R(ux)(vy)^{q-2}uvw^{q-1}.
$$
Set $A_0=k[t]$ and $A=\ds\frac{k[t][a,b,c]}{(a^2+b^2+tab+tc^2)}$. With this setting then the ring $R$ can be represented as
$$
R=\frac{A[u,v,w,x,y,z]}{(ux-a, vy-b, wz-c)}.
$$
Applying Proposition ~\ref{modify} with $f=ab^{q-2}\in
A_{q-1}$, we obtain
\begin{eqnarray*}
J_q\cap k[t]&=&(u^q,v^q,w^q,x^q,y^q,z^q)R:_{k[t]}(ux)(vy)^{q-2}uvw^{q-1}\\
&=&(u^q,v^q,w^q,x^q,y^q,z^q)R:_{k[t]}fuvw^{q-1} \\
&=&(a^{q-1},b^{q-1},c)A:_{k[t]} ab^{q-2}\\
&=&\big(P_{q-2}\big),
\end{eqnarray*}
where $P_{q-2}$ is the polynomial defined in Lemma~\ref{ss}. Now we can now use the same argument
as in the last paragraph of Theorem ~\ref{infinite}.
\end{proof}


However we do not know if the Frobenius powers
of the ideal $(u,v,w,x,y,z)$ have linear growth of primary decomposition. Thus we would like to ask:

\noindent\textbf{Question.} Let $k$ be a field of prime characteristic $p$. Do the Frobenius powers of the ideal
$I=(u,v,w,x,y,z)$ in the hypersurface
$$
\ds R=\frac{k[t,u,v,w,x,y,z]}{(u^2x^2+v^2y^2+tuxvy+tw^2z^2)}
$$
 have linear growth of primary decompositions?


\section{The example of Brenner and Monsky}


Consider the hypersurface
$$
\ds R=\frac{k[t,x,y,z]}{(z^4+z^2xy+zx^3+zy^3+tx^2y^2)},
$$
where $k$ is an algebraically  closed field of characteristic 2. This hypersurface was
first studied by Monsky in~\cite{monsky} and recently it was used again by Brenner-Monsky
to disprove the localization conjecture in tight closure theory. More specifically, Brenner
and Monsky showed that $I_S^*\neq (I_S)^*$ for $I=(x^4,y^4,z^4)$ and $S=k[t]\setminus \{0\}$,
where $(-)^*$ denotes the tight closure operation.

It is reasonable to ask whether there is any example of
ideals that do not have the linear growth property. We restate the theorem of Sharp and Nossem in ~\cite{sharp-nossem}
which relates the linear growth property and the localization
problem.


\begin{theorem}\cite[Theorem 7.14]{sharp-nossem} Let $R$ be a ring of positive characteristic
$p$ and $I$ an ideal of $R$. Suppose that $R$ has a $p^{m_0}$-weak test element, and that the set
$\bigcup_{q=p^e}\ass R/I^{[q]}$ is finite.

Then the tight closure of $I$ commutes with the localization at any
multiplicative subset of $R$ if the Frobenius powers of $I$ have
linear growth of primary decompositions.
\end{theorem}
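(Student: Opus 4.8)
The plan is to prove the equality $(I^*)R_U=(IR_U)^*$ for an arbitrary multiplicative set $U\subseteq R$; throughout $q$ denotes a power $p^e$. One inclusion, $(I^*)R_U\subseteq (IR_U)^*$, is the persistence of tight closure along $R\to R_U$, which I would obtain from the existence of the weak test element; the content is the reverse inclusion, and the idea is to show that if $z\in R$ satisfies $z/1\in (IR_U)^*$, then there is a single element $v\in U$, \emph{independent of $q$}, with $vz\in I^*$, whence $z/1=(vz)/v\in (I^*)R_U$.

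Since $z/1\in (IR_U)^*$, after clearing denominators we may pick $c\in R$ such that $c\notin\mathfrak p$ for every minimal prime $\mathfrak p$ of $R$ disjoint from $U$ (equivalently, the image of $c$ lies in $(R_U)^\circ$) and $cz^q\in I^{[q]}R_U$ for all $q\gg 0$; thus $cz^q\in I^{[q]}R_U\cap R$ for all $q\gg 0$. Now I invoke the two finiteness inputs: let $\mathcal P$ be the \emph{finite} set consisting of the minimal primes of $R$ together with all primes in $\bigcup_q\ass R/I^{[q]}$, and for each $P\in\mathcal P$ meeting $U$ choose $v_P\in P\cap U$; put $v=\prod_{P\cap U\ne\emptyset}v_P\in U$.

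Fix $q\gg 0$ and, using the linear growth hypothesis, a primary decomposition $I^{[q]}=Q_1\cap\cdots\cap Q_r$ with $(\sqrt{Q_i})^{\gamma q}\subseteq Q_i$ for the universal constant $\gamma$. Localizing and contracting, $I^{[q]}R_U\cap R=\bigcap\{Q_i:\sqrt{Q_i}\cap U=\emptyset\}$, so $cz^q\in Q_i$ whenever $\sqrt{Q_i}\cap U=\emptyset$; and if $\sqrt{Q_i}\cap U\ne\emptyset$ then $\sqrt{Q_i}=P$ for some $P\in\mathcal P$ meeting $U$, so $v_P\mid v$ and $v^{\gamma q}\in P^{\gamma q}=(\sqrt{Q_i})^{\gamma q}\subseteq Q_i$. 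In either case $cv^{\gamma q}z^q\in Q_i$, hence $c\,(v^\gamma z)^q=cv^{\gamma q}z^q\in\bigcap_i Q_i=I^{[q]}$. So $c(v^\gamma z)^q\in I^{[q]}$ for all $q\gg 0$. To conclude $v^\gamma z\in I^*$, work modulo each minimal prime $\mathfrak p$ of $R$: if $\mathfrak p\cap U\ne\emptyset$ then $v\in\mathfrak p$, so the image of $v^\gamma z$ in $R/\mathfrak p$ is $0$; if $\mathfrak p\cap U=\emptyset$ then the nonzero image of $c$ in $R/\mathfrak p$ witnesses that the image of $v^\gamma z$ lies in $(I(R/\mathfrak p))^*$. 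Since $R$ has a weak test element, membership in $I^*$ is detected on the quotients by the minimal primes, so $v^\gamma z\in I^*$, and therefore $z/1\in (I^*)R_U$.

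The one genuinely delicate point — and the reason both auxiliary hypotheses appear — is producing a single $v$ that simultaneously annihilates every ``$U$-meeting'' primary component of $I^{[q]}$ for \emph{all} $q$ at once. The linear growth property pins down the exponent $\gamma q$ uniformly in $q$, but one still needs the set of radicals $\sqrt{Q_i}$ occurring over all $q$ to be finite, so that the product defining $v$ ranges over a fixed index set; this is exactly the requirement that $\bigcup_q\ass R/I^{[q]}$ be finite, and the weak test element is needed both for persistence and for reducing the final $I^*$-membership to the reduced quotients. (For localization at one element $f$ — the Smith--Swanson situation — the element $f$ itself lies in every prime meeting $\{1,f,f^2,\dots\}$, so neither auxiliary hypothesis is required; it is the passage to arbitrary multiplicative sets that forces them.)
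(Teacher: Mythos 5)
The paper does not prove this theorem; it restates it verbatim as a citation of \cite[Theorem 7.14]{sharp-nossem}, so there is no in-text argument to compare against. What you have written is a direct proof along the lines one would hope for, and as far as I can see the core of it is sound: after saturating, the linear growth bound $(\sqrt{Q_i})^{\gamma q}\subseteq Q_i$ together with the finiteness of $\bigcup_q\ass R/I^{[q]}$ lets you manufacture a single $v\in U$ and exponent $\gamma$, uniform in $q$, so that $c(v^\gamma z)^q\in\bigcap_i Q_i=I^{[q]}$. One point you should make explicit: replace the decomposition supplied by the linear growth hypothesis by an irredundant one (discarding components only improves the intersection, and the estimate $(\sqrt{Q_i})^{\gamma q}\subseteq Q_i$ survives), so that every $\sqrt{Q_i}$ genuinely lies in $\ass R/I^{[q]}$ and hence in your finite set $\mathcal P$; as stated, a redundant decomposition could have a component whose radical escapes $\mathcal P$.

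On the test element: your final step, reducing to the domains $R/\mathfrak p$, does not actually require the weak test element. Once you have $c(v^\gamma z)^q\in I^{[q]}$ with $c$ avoiding every minimal prime disjoint from $U$, pick for each minimal prime $\mathfrak p$ an element $d_{\mathfrak p}\in\bigl(\bigcap_{\mathfrak q\ne\mathfrak p}\mathfrak q\bigr)\setminus\mathfrak p$ and set
$$
c'=\sum_{\mathfrak p\cap U=\emptyset}d_{\mathfrak p}\,c+\sum_{\mathfrak p\cap U\ne\emptyset}d_{\mathfrak p}.
$$
Then $c'\in R^\circ$ and $c'(v^\gamma z)^q\in I^{[q]}+\sqrt 0$, and raising to the $p^N$-th Frobenius power with $(\sqrt 0)^{[p^N]}=0$ yields $v^\gamma z\in I^*$. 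Likewise the containment $(I^*)R_U\subseteq(IR_U)^*$ is unconditional, since $R^\circ$ maps into $(R_U)^\circ$. So in your route the weak test element hypothesis is never actually invoked in an essential way. Sharp and Nossem's own proof (which is what the paper is citing) is organized quite differently, via perfect closures and the machinery of test exponents, and that is where the weak test element hypothesis earns its keep in their framework. Your argument is more elementary; either you have rediscovered a sharper formulation, or you should double-check the exact conventions in Sharp--Nossem (their notion of ``commutes with localization'' and of tight closure in the localized, possibly non-excellent ring $R_U$) to make sure nothing is hidden there that requires the test element after all.
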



Consider the ring $R$ and the ideal $I$ in the example of Brenner and Monsky mentioned above.
The ring $R$ is an affine domain over a field, hence it
possesses a test element. Thus, in view of the above theorem, and the result of Brenner
and Monsky, the linear growth property would fail for $I$ if the set
$\bigcup_{q=2^e}\ass R/I^{[q]}$ is finite. Thus it would be interesting to know
the answer to the following question.

\noindent\textbf{Question.} Let $k$ be an algebraically closed field of characteristic 2. Set
$$
R=\frac{k[t,x,y,z]}{(z^4+z^2xy+zx^3+zy^3+tx^2y^2)}.
$$
Is the set
$$
\bigcup_{q=2^e}\ass \frac{R}{(x^q,y^q,z^q)}
$$
finite?


\textbf{Acknowledgement.} The author would like to thank Irena Swanson for suggesting the problem
and for many useful discussions. He would also like to thank Paul C. Roberts for his financial support. The author is grateful to Anurag Singh and the referee for several valuable comments.


\noindent{\bf Remark.} Recently, after the final version of the paper was submitted for publication, the author was able to show, using Monsky's calculations in \cite{brenner-monsky} and \cite{monsky}, that the question above has a negative answer, namely that the set $\bigcup\ass R/(x^q,y^q,z^q)$ is actually infinite.


\vspace{20pt}


\noindent \textsc{\small Department of Mathematics, University of
Utah, Salt Lake City, UT 84112}\\
\noindent {\small Email address: \texttt{dinh@math.utah.edu}.}


\end{document}